\def\be{\begin{equation}}
\def\ee{\end{equation}}
\def\beq{\begin{eqnarray*}}
\def\eeq{\end{eqnarray*}}
\def\Z{\mathbb{Z}}
\newcommand{\pic}[3]{\parbox[c]{#1cm}{\includegraphics[scale=#2]{#3}}}
\newcommand{\picw}[2]{\parbox[c]{#1cm}{\includegraphics[width = #1cm]{#2}}}
\newcommand{\cerchio}{\includegraphics[width = .4 cm]{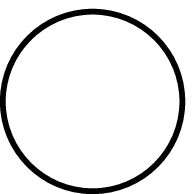}}
\newcommand{\teta}{\includegraphics[width = .4 cm]{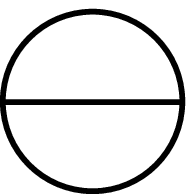}}
\newcommand{\tetra}{\includegraphics[width = .4 cm]{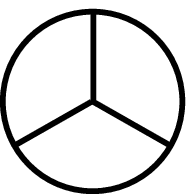}}
\newcommand{\gl}{{\rm gl}}
\newcommand{\ord}{{\rm ord}}
\newtheorem{theo}{Theorem}[section]
\newtheorem{cor}[theo]{Corollary}
\newtheorem{lem}[theo]{Lemma}
\newtheorem{prop}[theo]{Proposition}
\theoremstyle{definition}
\newtheorem{defn}[theo]{Definition}
\newtheorem{ex}[theo]{Example}
\newtheorem{quest}[theo]{Question}
\newtheorem{rem}[theo]{Remark}
\author{Alessio Carrega}
\address{Dipartimento di Matematica, Largo Pontecorvo 5, 56127 Pisa, Italy}
\email{carrega at mail dot dm dot unipi dot it}
\title[The Tait conjecture in $\#_g(S^1\times S^2)$]{The Tait conjecture in $\#_g(S^1\times S^2)$}
\begin{document}

\begin{abstract}
The Tait conjecture states that alternating reduced diagrams of links in $S^3$ have the minimal number of crossings. It has been proved in 1987 by M. Thistlethwaite, L. Kauffman and K. Murasugi studying the Jones polynomial. In \cite{Carrega_Tait1} the author proved an analogous result for alternating links in $S^1\times S^2$ giving a complete answer to this problem. In this paper we extend the result to alternating links in the connected sum $\#_g(S^1\times S^2)$ of $g$ copies of $S^1\times S^2$. In $S^1\times S^2$ and $\#_2(S^1\times S^2)$ the appropriate version of the statement is true for $\Z_2$-homologically trivial links, and the proof also uses the Jones polynomial. Unfortunately in the general case the method provides just a partial result and we are not able to say if the appropriate statement is true. 
\end{abstract}

\maketitle

\setcounter{tocdepth}{1}
\tableofcontents

\section{Introduction}

In the $19^{\rm th}$ century \cite{Tait}, P.G. Tait stated three conjectures about \emph{crossing number}, \emph{alternating links} and \emph{writhe number}. The \emph{crossing number} is the minimal number of crossings that a link diagram must have to represent that link. An \emph{alternating link} is a link that admits an \emph{alternating diagram}: a diagram $D$ such that the parametrization of its components $S^1\rightarrow D\subset D^2$ meets overpasses and underpasses alternately. All the conjectures have been proved before 1991. A diagram $D$ of a link in $S^3$ is said to be \emph{reduced} if it has no crossings as the ones in Fig.~\ref{figure:reducedDS3} (the blue parts cover the rest of the diagram). 

\begin{figure}[htbp]
\begin{center}
\includegraphics[scale=0.6]{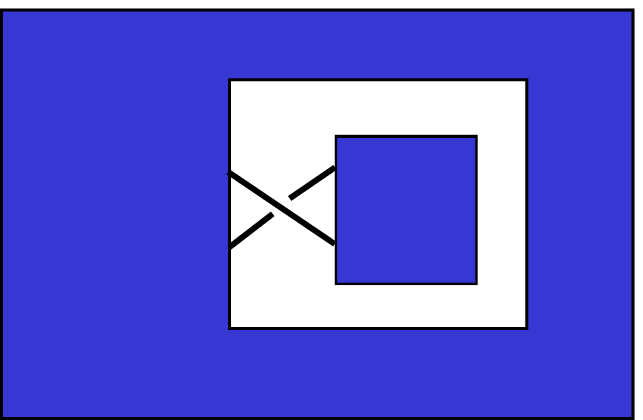}
\hspace{0.5cm}
\includegraphics[scale=0.6]{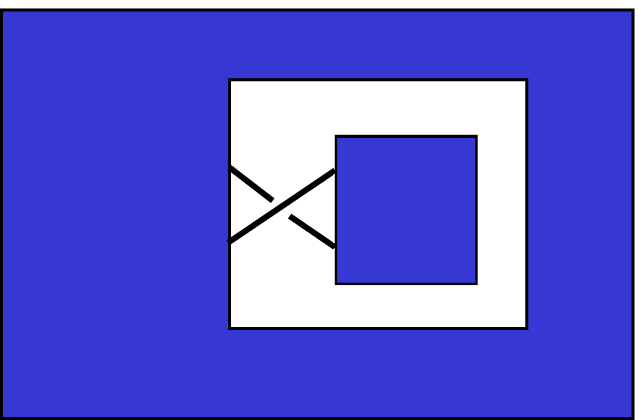}
\end{center}
\caption{Non reduced diagrams of links in $S^3$.}
\label{figure:reducedDS3}
\end{figure}

We focus just on one Tait conjecture, the following one: \emph{``Every reduced alternating diagram of links in $S^3$ has the minimal number of crossings''.} 

In 1987 M. Thistlethwaite \cite{Thistlethwaite}, L.H. Kauffman \cite{Kauffman_Tait} and K. Murasugi \cite{Murasugi1, Murasugi2} proved this conjecture. We are interested in the more general case of links in the connected sum $\#_g(S^1\times S^2)$ of $g\geq 0$ copies of $S^1\times S^2$ ($g=0$ means $S^3$ and $g=1$ means $S^1\times S^2$). The work is a direct extension of the case of links in $S^1\times S^2$ that has been studied by the author in \cite{Carrega_Tait1} giving a complete answer to the problem.

A link diagram in the 2-disk with $g$ holes,  $S_{(g)}$, defines a link in its thickening $H_g=S_{(g)}\times [-1,1]$ that is the handlebody of genus $g$, and hence in its double $\#_g(S^1\times S^2)$. We call \emph{H-decomposition} a decomposition of $\#_g(S^1\times S^2)$ as the union of two handlebodies of genus $g$ that intersect on the boundary. There is only one H-decomposition of $\#_g(S^1\times S^2)$ up to isotopies (Theorem~\ref{theorem:Heegaard_split_S1xS2}). Once a proper embedding of $S_{(g)}$ in a handlebody of the H-decomposition is fixed, every link in $\#_g(S^1\times S^2)$ can be described by a diagram in the punctured disk $S_{(g)}$. Thus we can still talk about crossing number and alternating diagrams (Definition~\ref{defn:alt_cr_num}). We define the \emph{crossing number} of a link $L\subset \#_g(S^1\times S^2)$ as the minimal number of crossings that a diagram must have to represent $L$ in some embedded disk with $g$ holes. A link diagram divides $S_{(g)}$ in 2-dimensional connected \emph{regions}. We say that a region of $S_{(g)}$ defined by $D$ is \emph{external} if it touches the boundary of $S_{(g)}$, otherwise it is \emph{internal}. We extend the notion of ``reduced'' as follows:
\begin{defn}\label{defn:reduced_g}
A link diagram $D \subset S_{(g)}$ is \emph{simple} if there is no disk $B$ embedded in $S_{(g)}$ whose boundary intersects $D$ exactly in one crossing (as the ones of Fig.~\ref{figure:reducedD_g}, the yellow square is the disk $B$), and $D$ has no crossings adjacent to two external regions (as the ones of Fig.~\ref{figure:reducedD2_g}), neither crossings adjacent twice to the same external region.
\end{defn}

\begin{figure}[htbp]
\begin{center}
\includegraphics[scale=0.55]{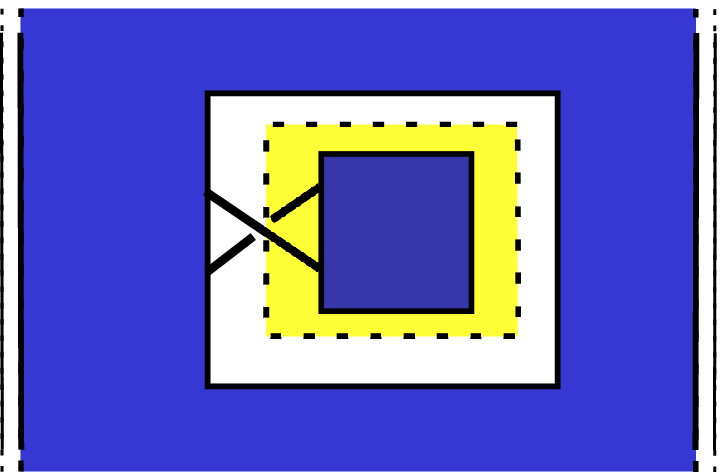} 
\hspace{0.5cm}
\includegraphics[scale=0.55]{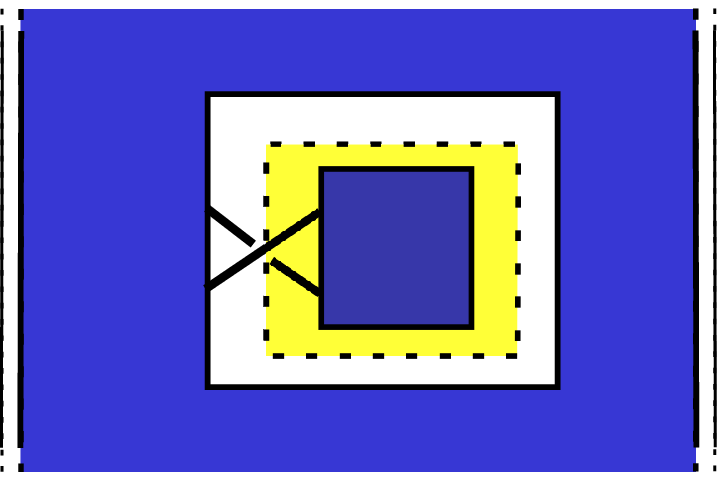}
\end{center}
\caption{Some non simple diagrams. We only show a
portion of $S_{(g)}$ homeomorphic to $(-1, 1)\times [-1,1]$. The blue parts cover the rest of the diagram and the yellow box is the embedded disk $B\subset S_{(g)}$ that intersects the diagram just in a crossing.}
\label{figure:reducedD_g}
\end{figure}

\begin{figure}[htbp]
\begin{center}
\includegraphics[scale=0.55]{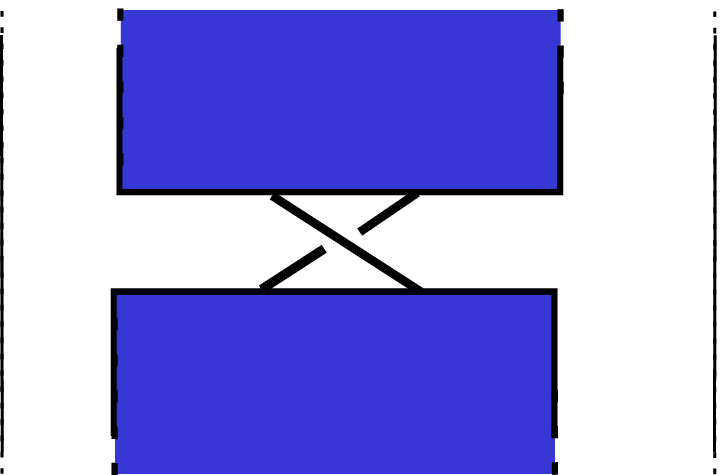} 
\hspace{0.5cm}
\includegraphics[scale=0.55]{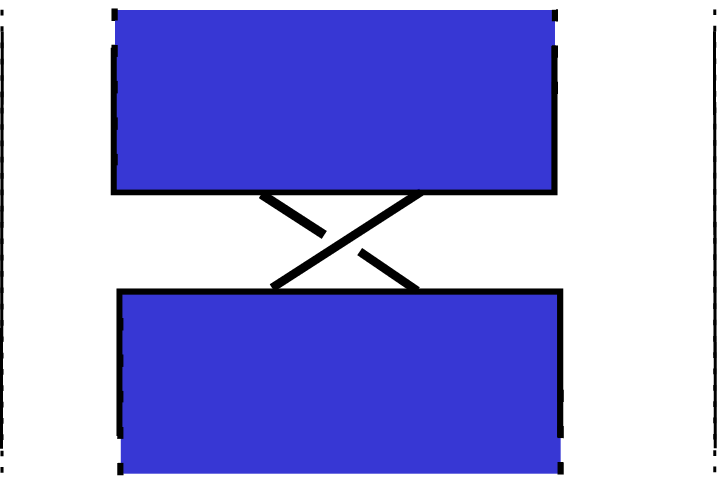}
\end{center}
\caption{More non simple diagrams. We only show a
portion of $S_{(g)}$ homeomorphic to $(-1, 1)\times [-1,1]$. The blue parts cover the rest of the diagram.}
\label{figure:reducedD2_g}
\end{figure}

Our definition of ``simple'' was designed to obtain the following theorem, which is the main result of the paper. Before stating it, we specify that we say that a link $L$ is $\Z_2$-\emph{homologically trivial} if its homology class $[L]\in H_1(S^1\times S^2; \Z_2)$ is trivial $[L]=0$ ($\Z_2 = \Z / 2\Z$). Furthermore we remind that being $\Z_2$-homologically trivial is equivalent to be the boundary of an embedded surface. 

\begin{theo}\label{theorem:Tait_conj_g}
Let $D\subset S_{(g)}$ be an alternating, simple diagram that represents a link $L\subset \#_g(S^1\times S^2)$ by a proper embedding $e$ of $S_{(g)}$ in a handlebody of the H-decomposition. Suppose that $L$ is non H-split (\emph{e.g.} a knot), $\Z_2$-homologically trivial and with homotopic genus $g$. Then for any diagram $D'\subset S_{(g)}$ that represents $L$ by an embedding $e'$ of $S_{(g)}$
$$
n(D) \leq n(D') + \frac{g-1}{2} ,
$$
where $n(D)$ and $n(D')$ are the number of crossings of $D$ and $D'$. In particular if $g\leq 2$, we have that $n(D)$ is the crossing number of $L$.
\end{theo}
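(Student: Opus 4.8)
The plan is to transport the Kauffman--Murasugi--Thistlethwaite argument, which controls the crossing number through the breadth (span) of the Kauffman bracket, into the Kauffman bracket skein module of $\#_g(S^1\times S^2)$. First I would read a diagram $D\subset S_{(g)}$ as an element $\langle D\rangle$ of the skein module $\mathcal{K}(H_g)$ of the handlebody, which is free over $\Z[A^{\pm 1}]$ with basis the isotopy classes of multicurves carrying no contractible component. Expanding the bracket as a state sum and grouping the $2^{n(D)}$ states according to the essential multicurve $\gamma$ they produce --- each contractible circle contributing a factor $(-A^2-A^{-2})$ --- yields
$$
\langle D\rangle \;=\; \sum_{\gamma} p_\gamma(A)\,\gamma, \qquad p_\gamma\in\Z[A^{\pm 1}].
$$
Composing with the natural map $\mathcal{K}(H_g)\to\mathcal{K}(\#_g(S^1\times S^2))$ and normalizing by the writhe, the resulting element is an isotopy invariant of $L$; in particular it is computed equally by $D$ (with embedding $e$) and by $D'$ (with $e'$). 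The hypothesis that $L$ is $\Z_2$-homologically trivial is what lets me project onto the torsion-free part of the skein module, where a genuine maximal-minus-minimal $A$-degree is defined, while non-$H$-splitness guarantees that the relevant coefficient does not vanish; together they produce a well-defined span invariant $B(L)$.

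The two halves of the estimate are then as follows. For the \emph{upper bound}, I would run the standard adjacent-state analysis: passing from the all-$A$ state to the all-$B$ state by flipping one crossing at a time changes the exponent $\sigma(s)=a(s)-b(s)$ by $2$ and the contractible-circle count by $\pm1$, so the top and bottom $A$-degrees occurring in any $p_\gamma$ are bounded in terms of $n(D')$ and the numbers of state circles. The combinatorial heart is the surface analogue of the inequality $|s_A|+|s_B|\le n+2$, obtained from the Euler characteristic of the relevant Turaev-type cobordism inside $S_{(g)}$; since essential state circles are retained as basis elements rather than contracted to scalars, they never inflate the polynomial span, and carrying this through gives $B(L)\le 4\,n(D')$ for every diagram $D'$. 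For the \emph{sharpness}, I would show that an alternating \emph{simple} diagram is \emph{adequate}: the conditions of Definition~\ref{defn:reduced_g} are designed precisely so that in the all-$A$ and all-$B$ states no state circle is adjacent to itself at a crossing, whence the extreme-degree monomials in the $s_A$- and $s_B$-coefficients survive with coefficient $\pm1$ and cannot be cancelled by any other state. Computing the two extreme degrees --- and observing that the essential state circles remove exactly $2(g-1)$ from the span, with $\chi(S_{(g)})=1-g$ entering the count and the homotopic-genus-$g$ hypothesis forcing the deficiency to be exactly this much --- yields the sharp value $B(L)=4\,n(D)-2(g-1)$.

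Combining the two estimates gives $4\,n(D)-2(g-1)=B(L)\le 4\,n(D')$, i.e. $n(D)\le n(D')+\tfrac{g-1}{2}$; and when $g\le2$ the fractional right-hand side, compared with the integrality of crossing numbers, forces $n(D)\le n(D')$, so $D$ realizes the crossing number.

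I expect the main obstacle to be the no-cancellation step in the skein-module setting. Unlike the case of $S^3$, where every state closes up into contractible circles and the bracket is a scalar, here distinct states may contribute to the \emph{same} essential basis multicurve $\gamma$, so the extreme-degree terms could in principle interfere; ruling this out --- and pinning down the exact deficiency $2(g-1)$ rather than a weaker genus-dependent bound --- is exactly where all four hypotheses (simple, non-$H$-split, $\Z_2$-homologically trivial, and homotopic genus $g$) must be used simultaneously. A secondary difficulty, already present in the $g=1$ case of \cite{Carrega_Tait1}, is making the span a genuine invariant despite the torsion of the skein module; the $\Z_2$-homological triviality is what I would lean on to control it.
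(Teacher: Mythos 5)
Your overall architecture (state sum, adequacy of simple alternating diagrams, an Euler--characteristic bound on the number of state circles, comparison of the extreme degrees coming from the all-plus and all-minus states) is the same as the paper's, but there is a genuine gap at the central point: the treatment of the homotopically essential state circles. You propose to keep them as basis elements of the free module $\mathcal{K}(H_g)$ so that they ``never inflate the polynomial span''. However, the element $\sum_\gamma p_\gamma(A)\,\gamma\in\mathcal{K}(H_g)$ is an invariant only of the framed link in the handlebody, not of $L\subset\#_g(S^1\times S^2)$: the diagrams $D$ and $D'$ use different embeddings $e,e'$ of $S_{(g)}$, so they generally give different elements of $\mathcal{K}(H_g)$ with different coefficient families $\{p_\gamma\}$. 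The only quantity shared by $D$ and $D'$ is the image in $K(\#_g(S^1\times S^2))\cong\mathbb{Q}(A)\cdot\varnothing$, i.e.\ the scalar $\langle L\rangle$, and to reach it every essential multicurve must be evaluated to a rational function in $\mathbb{Q}(A)$ whose orders at $0$ and $\infty$ are nonzero (already the unknot colored with $n$ has $\ord_\infty=2n$, and for $g\geq 2$ genuine denominators appear). Controlling these orders is exactly the content of Lemma~\ref{lem:psi}: one builds a shadow $X_s$ from the essential part $D_s$ of each state, uses the $\Z_2$-homological triviality to produce the unique admissible coloring $\xi_0$ with colors in $\{0,1\}$, and obtains $\psi(s)=\frac12\ord_\infty\langle D_s\rangle=\sum_R\chi(R)\xi_0(R)\leq 0$. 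The term $\psi(s_+)+\psi(s_-)$ is precisely what degrades the upper bound and produces the $(g-1)/2$ slack; your proposal assumes this difficulty away.

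Concretely, the two numbers you quote are not what the computation gives: the correct values are $B(\langle D'\rangle)\leq 4n(D')+2-2g$ for an arbitrary connected diagram with $g(D')=g$ (Lemmas~\ref{lem:ineq1_g} and~\ref{lem:ineq2_g}) and $B(\langle D\rangle)=4n(D)+4-4g$ for the adequate alternating one (Lemma~\ref{lem:alter_eq_g}), rather than $4n(D')$ and $4n(D)-2(g-1)$. Your final inequality comes out right only because both of your intermediate values are shifted by the same unexplained amount $2(g-1)$. Two smaller points: for $g\geq 2$ the bracket is a rational function rather than a Laurent polynomial, so the breadth must first be extended as in Definition~\ref{defn:breadth_g} (your appeal to the ``torsion-free part'' does not address this, since the paper works over $\mathbb{Q}(A)$ where there is no torsion); and the hypotheses ``non H-split'' and ``homotopic genus $g$'' are used to guarantee that every diagram $D'$ of $L$ is connected with $g(D')=g$, so that the lemmas above apply to $D'$, not to ensure the nonvanishing of a coefficient.
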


Being \emph{non H-split} means that every diagram $D$ that represents the link by any embedded disk with $g$ holes is connected, namely it is so as a 4-valent graph, while being with \emph{homotopic genus} $g$ means that no diagram $D$ in a punctured disk $S_{(g)}$ of the link can be contained in a punctured disk with less than $g$ holes (Definition~\ref{defn:split_homotopic_genus}). The really interesting links in $\#_g(S^1\times S^2)$ respect these conditions.

The \emph{Jones polynomial} and the \emph{Kauffman bracket} are invariants that associate to each (oriented or framed) link $L\subset S^3$ a Laurent polynomial with integer coefficients $J(L), \langle L \rangle \in \Z[A,A^{-1}]$ (we use the variable $A= t^{-\frac 1 4} = \sqrt{\pm q^{\pm 1}}$ and we normalize it so that $J\left( \pic{0.8}{0.3}{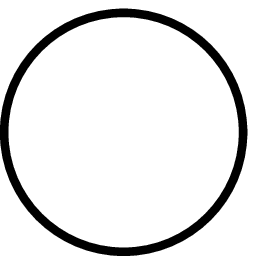} \right) = \left\langle \pic{0.8}{0.3}{banp.eps} \right\rangle = -A^2-A^{-2}$). 

The theorem of Thistelwaithe-Kauffman-Murasugi is one of the most notable applications of the Jones polynomial. In fact the proof of the classical case of links in $S^3$ is based on the \emph{breadth} $B(J(L))= J(\langle L \rangle)\in \Z$ of the Jones polynomial (or of the Kauffman bracket) that is an integer associated to each Laurent polynomial $f$: the difference between the biggest degree and the lowest degree of the non zero monomials of $f$.

The Jones polynomial, or more precisely the Kauffman bracket,  is also defined in $\#_g(S^1\times S^2)$ (see for instance \cite{Carrega-Martelli}, \cite{Carrega_Tait1}, \cite{Costantino2} or Section~\ref{section:Preliminaries_g}). For links in $S^1\times S^2$ ($g=1$) the Kauffman bracket is still a Laurent polynomial. When $g\geq 2$ it is a rational function and may not be a Laurent polynomial (see Example~\ref{ex:Kauff}). Therefore we need to extend the notion of ``breadth'' to rational functions $B(f/h)$, and we define it as the difference between the breadth of the numerator and the breadth of the denominator $B(f/h)= B(f)-B(h)$ (Definition~\ref{defn:breadth_g}).

Proceeding in a similar way as Thistelwaithe-Kauffman-Murasugi, we get the following result:

\begin{theo}\label{theorem:Tait_conj_Jones_g}
Let $D\subset S_{(g)}$ be a $n$-crossing, connected, alternating diagram of a $\Z_2$-homologically trivial link in $\#_g(S^1\times S^2)$ without crossings as the ones of Fig.~\ref{figure:reducedD_g} and not contained in a disk with less than $g$ holes ($g(D)=g$). Then
$$
B(\langle D \rangle) = 4n +4 -4g -4k ,
$$
where $k$ is the number of crossings adjacent to two external regions (as the ones of Fig.~\ref{figure:reducedD2_g}), neither adjacent twice to just one external region.
\end{theo}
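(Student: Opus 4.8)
The plan is to run the Kauffman--Thistlethwaite--Murasugi state sum argument, but inside the skein module of $\#_g(S^1\times S^2)$, where a system of loops in $S_{(g)}$ need no longer evaluate to a power of $d=-A^2-A^{-2}$. First I would expand $\langle D\rangle=\sum_s A^{a(s)-b(s)}\langle s\rangle$ over the $2^n$ states $s$, with $a(s)+b(s)=n$, where $\langle s\rangle$ is the evaluation in the skein module of the disjoint loops that $s$ produces in $S_{(g)}$. I would split those loops into the \emph{contractible} ones, each contributing a factor $d$ of breadth $4$, and the \emph{non-contractible} ones, which wind around the holes and must be evaluated using the relations coming from the reducing spheres of the $S^1\times S^2$ summands (Section~\ref{section:Preliminaries_g}). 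The key preliminary fact to isolate is that a loop winding an odd number of times around some hole evaluates to $0$ over the field of fractions $\mathbb{Q}(A)$: this is exactly where the hypothesis that $L$ is $\Z_2$-homologically trivial enters, since it forces the surviving states to carry only even windings, and it is the reason the statement is restricted to such links.

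Second, as in the classical case, I would locate the extreme degrees of $\langle D\rangle$ in the all-$A$ state $s_A$ and the all-$B$ state $s_B$, so that the top degree is bounded by $n+\deg_{\max}\langle s_A\rangle$, the bottom degree by $-n+\deg_{\min}\langle s_B\rangle$, and $B(\langle D\rangle)\le 2n+\deg_{\max}\langle s_A\rangle-\deg_{\min}\langle s_B\rangle$. Viewing $D$ as a $4$-valent graph in $S_{(g)}$ and using the checkerboard structure of an alternating diagram, the combinatorial count of loops still satisfies $|s_A|+|s_B|=n+2$, the number of complementary regions once the holes are filled in. The difference from $S^3$ is that some of these loops are non-contractible: each surviving such loop contributes breadth $0$ to $\deg_{\max}\langle s_A\rangle-\deg_{\min}\langle s_B\rangle$ rather than the breadth $4$ of a contractible one. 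I would then show that the genus hypothesis $g(D)=g$ forces exactly $g$ of the extreme-state loops to be non-contractible (one effectively recording each hole) and to survive with even winding, producing the correction $-4g$, while each of the $k$ crossings adjacent to two external regions (Fig.~\ref{figure:reducedD2_g}) merges two loops across a boundary component in the state that would otherwise be maximal, removing one further contractible loop from $s_A$ or $s_B$ and hence subtracting $4k$.

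Third, to upgrade the inequality to the equality $B(\langle D\rangle)=4n+4-4g-4k$ I would prove a no-cancellation statement for the two extreme coefficients. Following Kauffman I would use the adjacent-state lemma: changing one smoothing changes the loop number by $\pm1$, so the contributions of $s_A$ and $s_B$ sit strictly above, respectively below, all neighbouring states in degree, and the simplicity hypothesis (Definition~\ref{defn:reduced_g}) rules out the moves that would let a neighbour reach the same extreme degree. Here the absence of the crossings of Fig.~\ref{figure:reducedD_g} plays the role of reducedness, and the controlled treatment of crossings adjacent to external regions guarantees that, after the non-contractible loops have been evaluated, the top coefficient of $\langle s_A\rangle$ and the bottom coefficient of $\langle s_B\rangle$ are units up to sign, hence cannot be killed by the remaining states.

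The hard part will be the evaluation of the non-contractible loops and the bookkeeping of their breadth in the sense of Definition~\ref{defn:breadth_g}, because for $g\ge 2$ the bracket is a genuine rational function and loops around different holes do not evaluate independently: nested or interleaved configurations must be reduced to a standard multicurve basis before their top and bottom degrees can be read off, and one must check that this reduction introduces neither spurious cancellation at the extreme degrees nor an unexpected change in the denominator. In particular I expect the most delicate point to be showing that the net contribution of the $g$ surviving non-contractible loops lowers the breadth by exactly $4g$ --- appearing as a denominator of breadth $4g$ when $g\ge 2$ and as a loss of breadth in the numerator when $g=1$ (where the bracket remains a Laurent polynomial) --- and that the $\Z_2$-triviality is precisely the condition making every other loop configuration vanish. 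Once this local evaluation is under control, the global count $|s_A|+|s_B|=n+2$ together with the no-cancellation lemma yields the stated formula.
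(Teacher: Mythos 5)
Your outline reproduces the paper's strategy for the $k=0$ case --- extreme states $s_\pm$, the region/loop count, adequacy to rule out cancellation --- and you correctly isolate the evaluation of the non-contractible multicurves as the crux. But the mechanism you propose for the $-4k$ term is wrong: a crossing adjacent to two external regions destroys adequacy, and the correction measures a genuine cancellation between states at the extreme degrees, so no ``no-cancellation'' lemma can produce it. Take $g=1$ and the curve winding twice around the annulus with its single clasp crossing ($n=g=k=1$): one smoothing gives two parallel core curves, which evaluate to $1$, the other a single trivial circle, so $\langle D\rangle = A\cdot 1 + A^{-1}(-A^2-A^{-2}) = -A^{-3}$ and $B(\langle D\rangle)=0$, whereas $\ord_\infty \langle D|s_+\rangle - \ord_0\langle D|s_-\rangle = 4$; the top-degree contributions of the two states cancel, and no loop has been ``removed'' from an extreme state (one still has $s_+D+s_-D=n+1-g$). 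Note also that deleting one contractible loop from one extreme splitting would change the breadth bound by $2$, not $4$, so even the arithmetic of your mechanism does not close. The paper instead eliminates these $k$ crossings \emph{before} running the state-sum argument: each is removed by the sphere-intersection relation of Fig.~\ref{figure:sphere}, which divides the bracket by $-A^2-A^{-2}$ (subtracting exactly $4$ from the breadth) and lowers $g(D)$ by one, and then one inducts on $g$ down to the simple, hence adequate, $k=0$ case.

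Two further points. The $-4g$ bookkeeping is off: across the two extreme states there are $g+1$ non-contractible loops (one per external region), not $g$, and they do not each ``contribute breadth $0$''; in Lemma~\ref{lem:alter_eq_g} one extreme multicurve bounds a planar subsurface whose genus-$g$ region contributes $(-A^2-A^{-2})^{1-g}$ (so $\psi(s)=1-g$) while the other evaluates to $1$, and only the combination $2(s_+D+s_-D)+2(\psi(s_+)+\psi(s_-))=2(n+1-g)+2(1-g)$ yields $-4g$. Finally, the ``hard part'' you flag --- reading off top and bottom degrees of interleaved non-contractible configurations and checking monotonicity under a single smoothing change --- is exactly what the paper's Lemma~\ref{lem:psi} supplies via the shadow formula: $\Z_2$-triviality guarantees a unique admissible coloring of the shadow of $D_s$ by $0$ and $1$, which computes $\ord_\infty\langle D_s\rangle$ and is what makes the case analysis of Lemma~\ref{lem:ineq1_g} (Figs.~\ref{figure:cases_lem}--\ref{figure:cases_lem2}) tractable. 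Without some such device, both corrective terms remain unproved, so as written the proposal has a genuine gap.
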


We say that a link in $\#_g(S^1\times S^2)$ is alternating if it is represented by an alternating diagram in some embedded disk with $g$ holes. The previous theorem gives criteria to detect if a link in $\#_g(S^1\times S^2)$ is not alternating (Corollary~\ref{cor:conj_Tait_Jones_g}), and we show some examples (Example~\ref{ex:no_alt_g}).

The Kauffman bracket is also very sensitive to the $\Z_2$-homology class of the link. In fact the Jones polynomial of $\Z_2$-homologically non trivial links is always $0$ (Proposition~\ref{prop:0Kauff_g}).

The condition of Theorem~\ref{theorem:Tait_conj_g} of being $\Z_2$-homologically trivial is essential. In fact following the method of \cite[Subsection 3.1]{Carrega_Tait1} we can show that without that hypothesis the statement is false: the diagrams in Fig.~\ref{figure:no_Tait_g} are both alternating, simple and represent the same $\Z_2$-homologically non trivial knot $K\subset \#_g(S^1\times S^2)$ once fixed the embedding of the punctured disk, but they have a different number of crossings. 

\begin{figure}
\begin{center}
\includegraphics[scale=0.45]{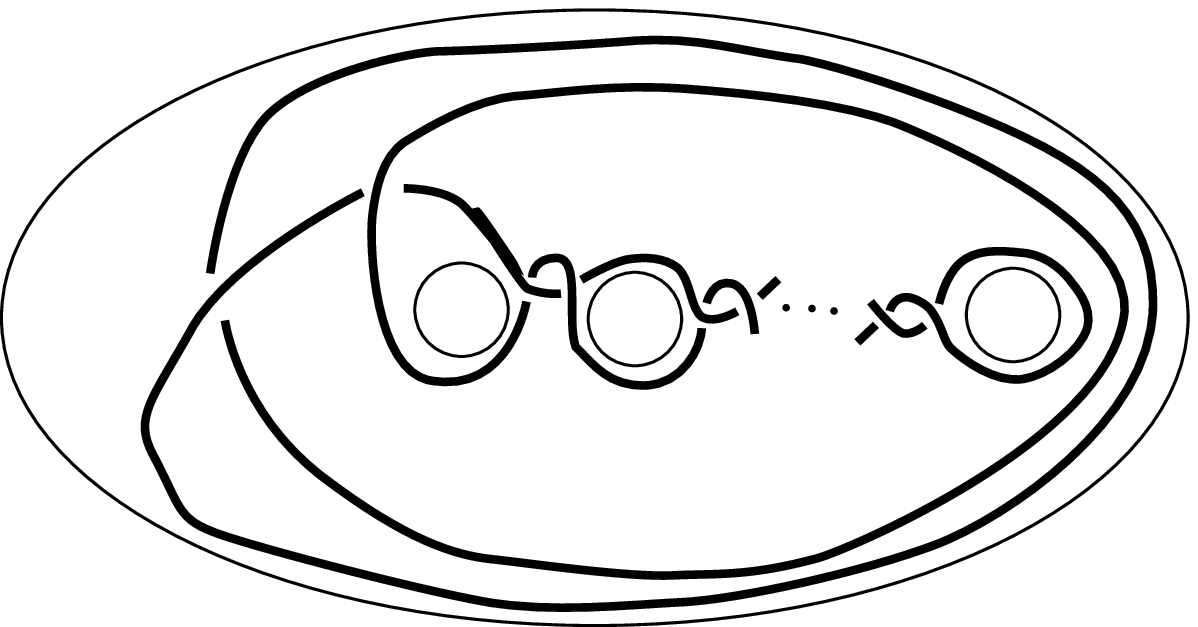} 
\hspace{0.5cm}
\includegraphics[scale=0.45]{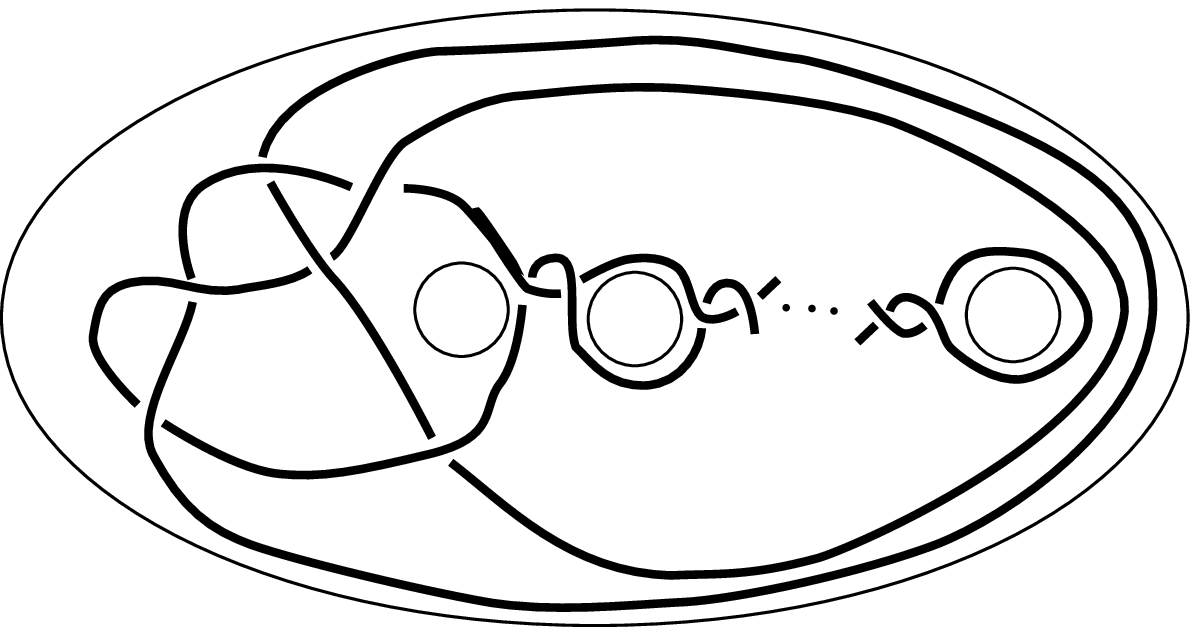}
\end{center}
\caption{Two alternating and simple diagrams of the same $\Z_2$-homologically non trivial knot which have different number of crossings.}
\label{figure:no_Tait_g}
\end{figure}

In $S^3$ all link diagrams with the minimal number of crossings are reduced. This statement has been discussed in \cite{Carrega_Tait1} for links in $S^1\times S^2$. Clearly the diagrams in $S_{(g)}$ with the minimal number of crossings do not have crossings as the ones of Fig.~\ref{figure:reducedD_g} (there is no disk $B$ embedded in $S_{(g)}$ whose boundary intersects the diagram in just one crossing). For $g\geq 1$ we can not remove all the crossings that are adjacent to two external regions (Fig.~\ref{figure:reducedD2_g}), that is why we use the word ``simple'' instead of ``reduced''. However most links do not have diagrams with such crossings: if the represented link does not intersect twice any non separating sphere the diagrams with the minimal number of crossings are reduced.

Unfortunately we are not able to get a sharper result than Theorem~\ref{theorem:Tait_conj_g} for $g\geq 3$. Remark~\ref{rem:ineq_psi} shows that if the natural extension of the Tait conjecture for $\Z_2$-homologically trivial links in $\#_g(S^1\times S^2)$ for $g\geq 3$ is true we should prove it with a more complicated method.

The proof for the case of links in $S^3$ and in $S^1\times S^2$ needs just few notions of skein theory. To work in this general case of links in $\#_g(S^1\times S^2)$ we need more complicated tools: either \emph{Turaev's shadows} or \emph{colored trivalent framed graphs}. We think that working with shadows and using the \emph{shadow formula} is the best choice.

\subsection*{Structure of the paper}
In the second section we recall the needed preliminaries about the Jones polynomial, the Kauffman bracket, \emph{skein theory}, diagrams and \emph{shadows}.

In the third section we introduce some further notions and give some propositions.

In the fourth one we prove Theorem~\ref{theorem:Tait_conj_g} and Theorem~\ref{theorem:Tait_conj_Jones_g}. The proof follows the classical one for links in $S^3$ further extending the one of \cite{Carrega_Tait1} for $S^1\times S^2$. 

In the last section we make some conjectures, ask questions and say something that shows that if the natural extension of the Tait conjecture for $\Z_2$-homologically trivial links in $\#_g(S^1\times S^2)$ for $g\geq 3$ is true, we need a more complicated method to prove it.

\subsection*{Acknowledgments}
The author is warmly grateful to Bruno Martelli for his constant support and encouragement.

\section{Preliminaries}\label{section:Preliminaries_g}
In this section we recall some basic notions about the \emph{Jones polynomial}, the \emph{Kauffman bracket}, \emph{skein theory}, \emph{diagrams} and \emph{shadows}. Everything we say is written very briefly and we give some reference.

\subsection{Kauffman bracket}
We can find in \cite{Carrega_Tait1} a more detailed (but still brief) exposition of the topics in this subsection. The \emph{Kauffman bracket} $\langle D \rangle \in \Z[A,A^{-1}]$ was born as a Laurent polynomial associated to a diagram $D$ of a link in $S^3$ or to a framed link. The Kauffman bracket is useful to define and understand the \emph{Jones polynomial} since they differ just by the multiplication of a power of $-A^3$. Thanks to result of Hoste-Przytycki \cite{HP2, Pr1, Pr2} (see also \cite[Proposition 1]{BFK}) and (with different techniques) to Costantino \cite{Costantino2}, it is also defined in the connected sum $\#_g(S^1\times S^2)$ of copies of $S^1 \times S^2$. In the general case of framed links in $\#_g(S^1\times S^2)$ the definition of the Kauffman bracket is based on the notion of ``\emph{skein vector space}'' $K(M)$. This is a vector space over the field of rational functions with rational coefficients $\mathbb{Q}(A)$ that is associated to the oriented 3-manifold $M$. 

The skein space is the vector space with basis the set of framed links of $M$ modulo the sub-space generated by the \emph{skein relations}:
$$
\begin{array}{rcl}
 \pic{1.2}{0.3}{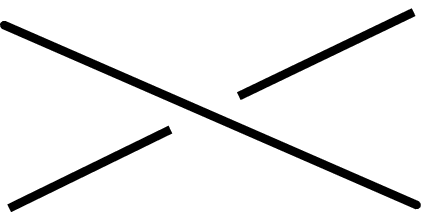}  & = & A \pic{1.2}{0.3}{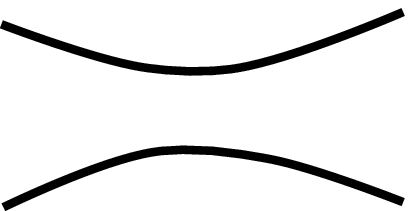}  + A^{-1}  \pic{1.2}{0.3}{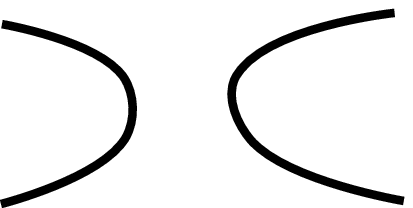}  \\
 L \sqcup \pic{0.8}{0.3}{banp.eps}  & = & (-A^2 - A^{-2})  D  \\
 \pic{0.8}{0.3}{banp.eps}  & = & (-A^2-A^{-2}) \varnothing
\end{array}
$$
These are local relations where the framed links in an equation differ just in the pictured 3-ball that is equipped with a positive trivialization. The elements of a skein space are called \emph{skeins} or \emph{skein elements}. 

The skein vector space of $\#_g(S^1\times S^2)$ is isomorphic tho the base field $\mathbb{Q}(A)$ and is generated by the empty set $\varnothing$. A framed link $L\subset \#_g(S^1\times S^2)$ determines a skein $L\in K(\#_g(S^1\times S^2))$ and as such it is equivalent to $\langle L \rangle \cdot \varnothing$ for a unique coefficient $\langle L \rangle \in \mathbb{Q}(A)$. This coefficient is by definition the \emph{Kauffman bracket} $\langle L \rangle$ of $L$. For $g\leq 1$ it comes out that the Kauffman bracket is a Laurent polynomial (see \cite[Proposition 3.8]{Carrega_Tait1}), but for $g\geq 2$ it is not (see Example~\ref{ex:Kauff}).

\begin{rem}\label{rem:tensor}
There is an obvious canonical linear map $K(M) \to K(M\# N)$ defined by considering a skein in $M$ inside $M\# N$. The linear map $K(\#_g(S^1\times S^2)) \rightarrow K(\#_{g+1}(S^1\times S^2))$ sends $\varnothing$ to $\varnothing$ and hence preserves the bracket $\langle L \rangle$ of $L\subset \#_g(S^1\times S^2)$.

This shows in particular that if $L$ is contained in a 3-ball, the bracket $\langle L \rangle$ is the same that we would obtain by considering $L$ inside $S^3$.
\end{rem}

There is a standard way to color the components of a framed link $L\subset M$ with a natural number and get a skein element of $K(M)$. These colorings are based on particular elements of the \emph{Temperley-Lieb algebra} called \emph{Jones-Wenzl projectors}. Coloring a component with $0$ is equivalent to remove it, while coloring with $1$ corresponds to consider the standard skein. Usually a component colored with $n$ is pictured with a square box with the letter ``$n$'' over the component, or over $n$ parallel copies of the component.

\subsection{Diagrams}

The manifold $\#_g(S^1\times S^2)$ is the double of the 3-dimensional handlebody $H_g$ of genus $g$ (the compact orientable 3-manifold with a handle-decomposition with just $k$ 0-handles and $k+g-1$ 1-handle). We call one such decomposition a \emph{H-decomposition}.

By a theorem of Thom we know that every two embeddings of a 3-disk in a fixed manifold are isotopic. Hence up to isotopies there is a unique Heegaard decoposition of $S^3$ that splits it into two 3-balls.

\begin{theo}\label{theorem:Heegaard_split_S1xS2}
Every two embeddings of the closed surface $\partial H_g$ of genus $g$ in $\#_g(S^1\times S^2)$ that split it into two copies of the the handlebody $H_g$ are isotopic.
\begin{proof}
In \cite[Remark 4.1]{Schultens} it is showed that if we glue two copies of $H_g$ along the boundary to get $\#_g(S^1\times S^2)$, the gluing map must be isotopic to the identity (\cite{Schultens} is an updated and illustrated translation of \cite{Waldhausen}). In \cite[Theorem 1.4]{Carvalho} is shown that two embeddings of $\partial H_g$ that split $\#_g(S^1\times S^2)$ in two copies of $H_g$ and define the identity map $\partial H_g$ (once identified the two embedded handlebodies with $H_g$), are isotopic. For the case $g=1$ we can also see the proof of \cite[Theorem 2.5]{Hatcher}.
\end{proof}
\end{theo}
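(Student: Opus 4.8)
The plan is to factor the statement into two independent steps that are natural from the gluing point of view: first normalizing the gluing diffeomorphism of each splitting to the identity, and then upgrading an identity gluing to an ambient isotopy.

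First I would encode each splitting by a gluing map. An embedding $e\colon \partial H_g \hookrightarrow \#_g(S^1\times S^2)$ separating the manifold into two pieces $V_1,V_2$, each homeomorphic to $H_g$, becomes abstract gluing data once we choose homeomorphisms $h_i\colon V_i \to H_g$: then $\#_g(S^1\times S^2)\cong H_g\cup_\psi H_g$, where $\psi=(h_1\circ e)\circ(h_2\circ e)^{-1}\in\mathrm{MCG}(\partial H_g)$. The only freedom is the choice of the $h_i$, determined up to self-homeomorphisms of $H_g$; replacing $h_1,h_2$ by $g_1\circ h_1,g_2\circ h_2$ multiplies $\psi$ on the two sides by the boundary restrictions of $g_1,g_2$. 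Hence the isotopy class of the embedded surface $e(\partial H_g)$ constrains $\psi$ only through its class in the double coset $\mathcal{H}_g\,\psi\,\mathcal{H}_g$, where $\mathcal{H}_g\le\mathrm{MCG}(\partial H_g)$ is the image of the handlebody mapping class group.

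Next I would run the normalization. Since $\#_g(S^1\times S^2)$ is by construction the double $H_g\cup_{\mathrm{id}}H_g$, the task is to show that any $\psi$ with $H_g\cup_\psi H_g\cong\#_g(S^1\times S^2)$ lies in the double coset of the identity; granting this, I may reselect the $h_i$ so that $\psi=\mathrm{id}$ exactly. This is the step I expect to be the genuine obstacle: it is precisely a computation in the mapping class group of the surface, governed by which automorphisms extend over the handlebody, and there is no avoiding a careful analysis of the handlebody subgroup $\mathcal{H}_g$ and its double cosets. This is the content of Waldhausen's classification, in the packaged form recorded by Schultens, which I would invoke rather than redo the surface-automorphism bookkeeping.

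Finally I would conclude. Applying the normalization to two given embeddings $e_1,e_2$, each presents $\#_g(S^1\times S^2)$ as the standard double with identity gluing, so both induce the identity gluing in the sense required; Carvalho's theorem then furnishes an ambient isotopy of $\#_g(S^1\times S^2)$ carrying $e_1(\partial H_g)$ onto $e_2(\partial H_g)$, which is the asserted uniqueness (for $g=1$ the same conclusion also follows from the classical treatment of the genus-one splitting of $S^1\times S^2$). As an alternative to the gluing route one could argue topologically, using that $\#_g(S^1\times S^2)$ has Heegaard genus exactly $g$, since its fundamental group is free of rank $g$, together with Haken's lemma to cut a minimal-genus Heegaard surface along reducing spheres into genus-one pieces, each with a unique splitting by the classical analysis of $S^1\times S^2$; but this leans on an equally heavy body of results, so I would keep the gluing-map argument as the main line.
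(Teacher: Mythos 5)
Your proposal follows essentially the same route as the paper: normalize the gluing map to the identity via Waldhausen's theorem (in the form recorded by Schultens), then invoke Carvalho's result that two splittings inducing the identity gluing are ambiently isotopic, with the classical genus-one case noted as an aside. The extra bookkeeping you supply on the double coset of the handlebody subgroup is a correct elaboration of what the cited results package, not a different argument.
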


Since $H_g$ collapses onto a graph, every link in $\#_g(S^1\times S^2)$ can be isotoped in a fixed handlebody of the H-decomposition. The handlebody is the natural 3-dimensional thickening of the disk with $g$ holes $S_{(g)}$. In $H_g$ there are many properly embedded punctured disks $S_{(g)}$ that thicken to $H_g$. If $g=1$, $S_{(g)}$ is an annulus and these embeddings differ by twists. Once one such embedding is fixed $S_{(g)}\rightarrow H_g \subset \#_g(S^1\times S^2)$, every link in $\#_g(S^1\times S^2)$ can be represented by a link diagram in $S_{(g)}$. Of course one such diagram is a generic projection of the link in the embedded disk with $g$ holes.

For $g=1$, $S_{(g)}$ is an annulus. Once a proper embedding of the annulus in a solid torus of the H-decomposition is fixed and given a diagram $D\subset S^1\times [-1,1] = S_{(1)}$ of a link $L\subset S^1\times S^2$, we can get a diagram $D'\subset S^1\times [-1,1]$ that represent $L$ with the embedding of the annulus obtained from the previous one by adding a twist following the move described in Fig.~\ref{figure:twist_diagr}.

\begin{figure}[htbp]
\beq
\picw{4.8}{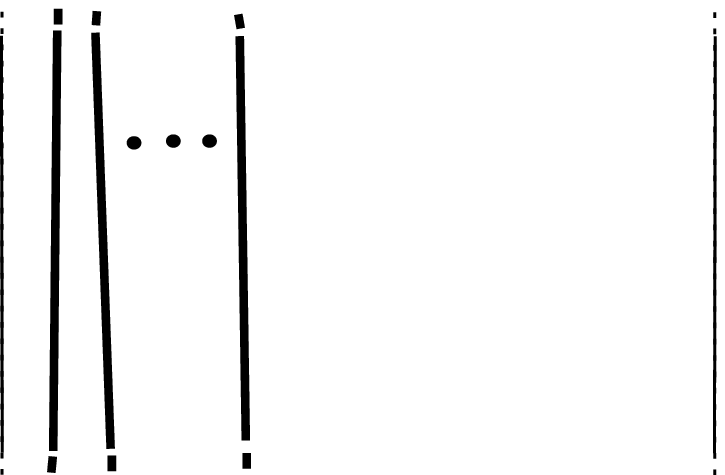} & \longrightarrow & \picw{4.8}{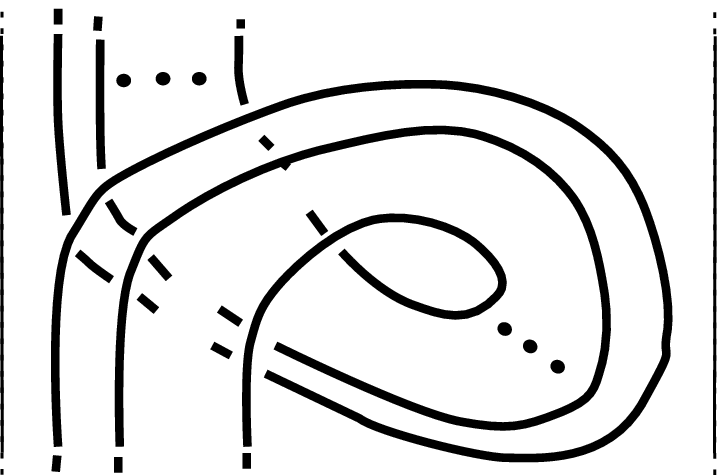} \\
D & & D' 
\eeq
\caption{Two diagrams of the same link in $S^1\times S^2$, the embedding of the annulus $S_{(1)}$ for $D'$ differs from the one of $D$ by the application of a positive twist. The diagrams differ just in the pictured portion that is diffeomorphic to $[-1,1]\times (-1,1)$.}
\label{figure:twist_diagr}
\end{figure}

We note that a link diagram divides $S_{(g)}$ in 2-dimensional connected \emph{regions}. We say that a region of $S_{(g)}$ defined by $D$ is \emph{external} if it touches the boundary of $S_{(g)}$, otherwise it is \emph{internal}.

Every time we will say ``proper embedding of $S_{(g)}$ in a handlebody $H$ of genus $g$'' we will mean also that the handlebody $H$ is a thickening of the embedded punctured disk.

\begin{defn}\label{defn:alt_cr_num}
A link diagram $D\subset S_{(g)}$ is \emph{alternating} if the parametrization of its components $S^1\rightarrow D\subset S_{(g)}$ meets overpasses and underpasses alternately.

Let $L$ be a link in $\#_g(S^1\times S^2)$. The link $L$ is \emph{alternating} if there is an alternating diagram $D\subset S_{(g)}$ that represents $L$ with a proper embedding of $S_{(g)}$ in a handlebody of the H-decomposition of $\#_g(S^1\times S^2)$. 

The \emph{crossing number} of $L\subset \#_g(S^1\times S^2)$ is the minimal number of crossings that a link diagram $D\subset S_{(g)}$ must have to represent $L$ for some proper embedding of $S_{(g)}$ in a handlebody of the H-decomposition of $\#_g(S^1\times S^2)$.
\end{defn}

\begin{rem}
Let $\varphi: \#_g(S^1\times S^2) \rightarrow \#_g(S^1\times S^2)$ be a diffeomorphism and let $L\subset \#_g(S^1\times S^2)$ be a link with a fixed position ($L$ is just a sub-manifold, it is not up to isotopies). Suppose that $L$ is in regular position for a proper embedded disk with $g$ holes $S\subset H_{(1)} \subset \#_g(S^1\times S^2)$ in a handlebody of the H-decomposition $\#_g(S^1\times S^2)= H_{(1)}\cup H_{(2)}$, $H_{(1)} \cong H_{(2)} \cong H_g$. Hence the couple $(L,S)$ defines a link diagram $D\subset S_{(g)}$. Then the link $\varphi(L)$ is in regular position for the punctured disk $\varphi(S)$ that is proper embedded in $\varphi(H_{(1)})$. By Theorem~\ref{theorem:Heegaard_split_S1xS2} $\varphi(H_{(1)}) = H_{(j)}$ (up to isotopy) for some $j=1,2$. The couple $(\varphi(L), \varphi(S))$ still defines the diagram $D\subset S_{(g)}$. Therefore the crossing number and the condition of being alternating are invariant under diffeomorphisms of $\#_g(S^1\times S^2)$.
\end{rem}

This diagrammatic approach is a useful tool to compute the Kauffman bracket.

We say that a link $L$ in a 3-manifold $M$ is $\Z_2$-\emph{homologically trivial} if its homology class with coefficients in $\Z_2$ is null $0=[L]\in H_1(M;\Z_2)$. This is equivalent to say that $L$ bounds an (maybe not orientable) embedded surface in $M$.

\begin{ex}\label{ex:Kauff}
We show in the table below some links in $\#_g(S^1\times S^2)$ together with their Kauffman bracket. They are all $\Z_2$-homologically trivial. In the list there are knots and links with a varying number of components. There are \emph{alternating} and \emph{non alternating} knots and links, Corollary~\ref{cor:conj_Tait_Jones_g} ensures us that example $(5)$ and example $(7)$ are actually non alternating, unfortunately we are not able to say if examples $(6)$ and $(11)$ are not alternating (see Example~\ref{ex:no_alt_g}). Some of them are \emph{H-split} (Definition~\ref{defn:split_homotopic_genus}) and some are not.

In example $(2)$, $\binom g k$ is the binomial coefficient, and the Kauffman bracket can be written as $f/h$, where $f$ and $h$ are the following Laurent polynomials: $f=\sum_{k=0}^g \binom{g}{k} (-A^2-A^{-2})^k$, $h=(-A^2-A^{-2})^{1-g}$. We have $f|_{A^2=i} = 1$, $h|_{A^2=i} =0$, hence $\langle D \rangle$ can not be a Laurent polynomial. 

In example $(6)$ there is a knot whose Kauffman bracket is of the form $1/f$ with $f\in \Z[A,A^{-1}]$ such that $1/f$ is not a Laurent polynomial.

In all the examples except $(8)$, $(10)$ and $(11)$, the Kauffman bracket is of the form $f/\cerchio_1^n$ for some $n\geq 0$ and $f\in \Z[A,A^{-1}]$ ($\cerchio_1= -A^2 -A^{-2}$). Examples $(8)$, $(10)$ and $(11)$ are not of that form. Example $(8)$ and $(11)$ are of the form $f/\cerchio_2$, while $(10)$ is of the form $f/(\cerchio_2\cerchio_3)$ for some $f\in \Z[A,A^{-1}]$ not divided by $\cerchio_2$ or $\cerchio_3$, where $\cerchio_n$ is the Kauffman bracket of the unknot colored with $n$. Note that $\cerchio_n\in \Z[A,A^{-1}]$ and that the roots of $\cerchio_n$ are all roots of unity. Hence these Kauffman brackets have poles in roots of unity different from $q=A^2=i$.

$$
\begin{array}{|c|c|c|}
\hline
 & \text{Diagram} & \langle D \rangle \\
\hline
\hline
(1) & \picw{6.3}{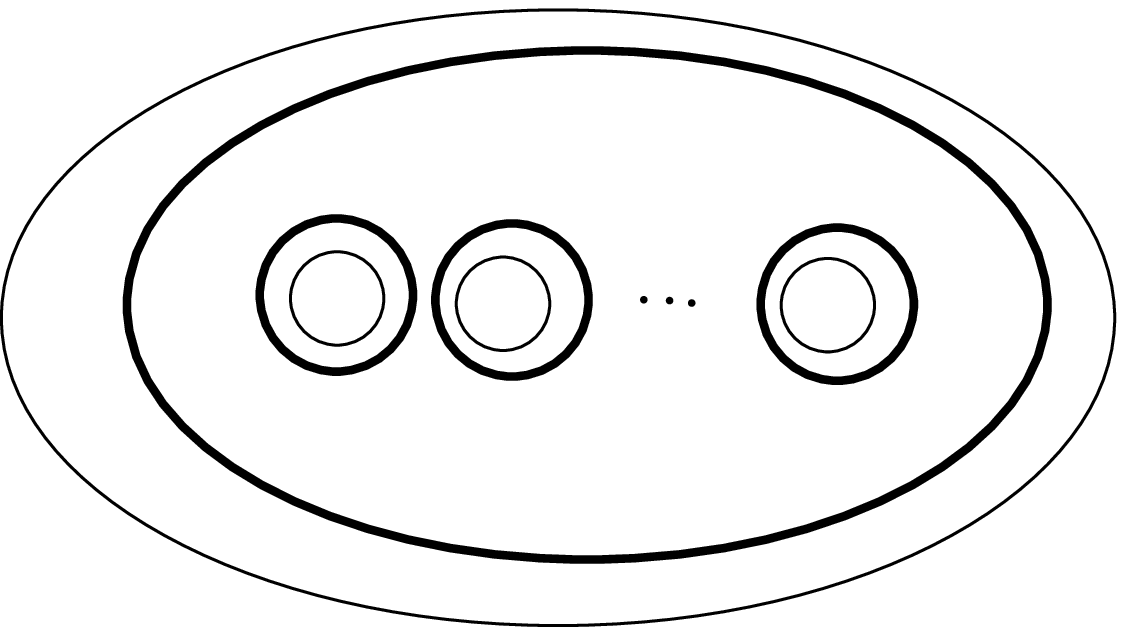} & \parbox[c]{4.5cm}{ 
\begin{center}$
(-A^2-A^{-2})^{1-g} 
$\end{center}
}\\
\hline
(2) & \picw{6.3}{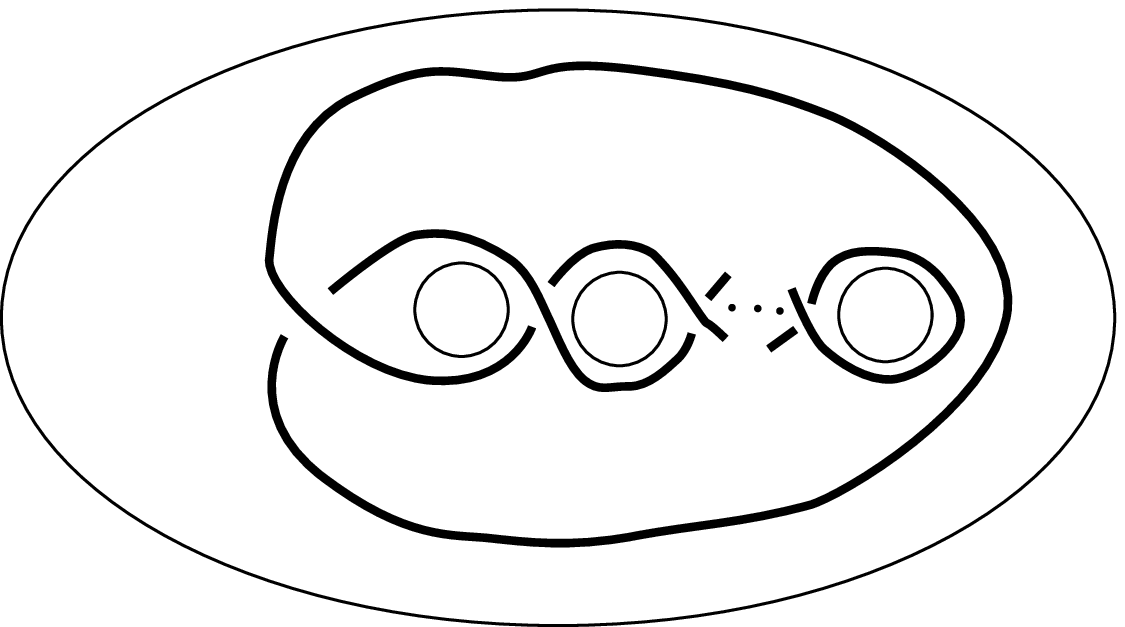} &  \parbox[c]{4.5cm}{
\begin{center}$
\sum_{k=0}^g \binom{g}{k} (-A^2-A^{-2})^{1-g+k} 
$\end{center}
}\\
\hline
(3) & \picw{6.3}{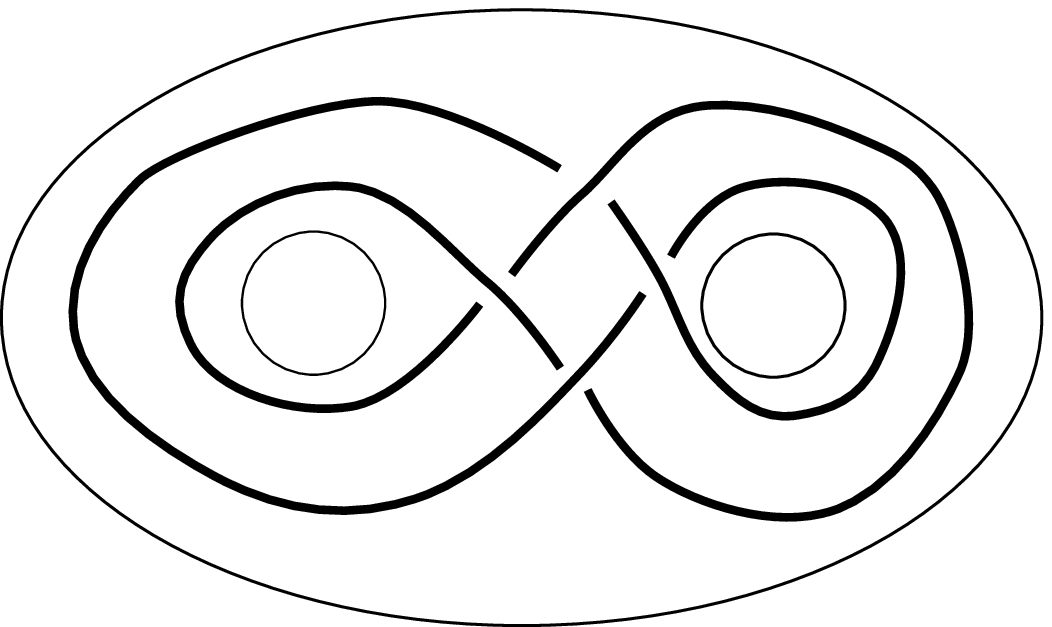} &  \parbox[c]{4.5cm}{
\begin{center}$
\frac{ A^{16} -A^{12} +A^8 +1 }{ A^8 +A^4} 
$\end{center}
}\\
\hline
(4) & \picw{6.3}{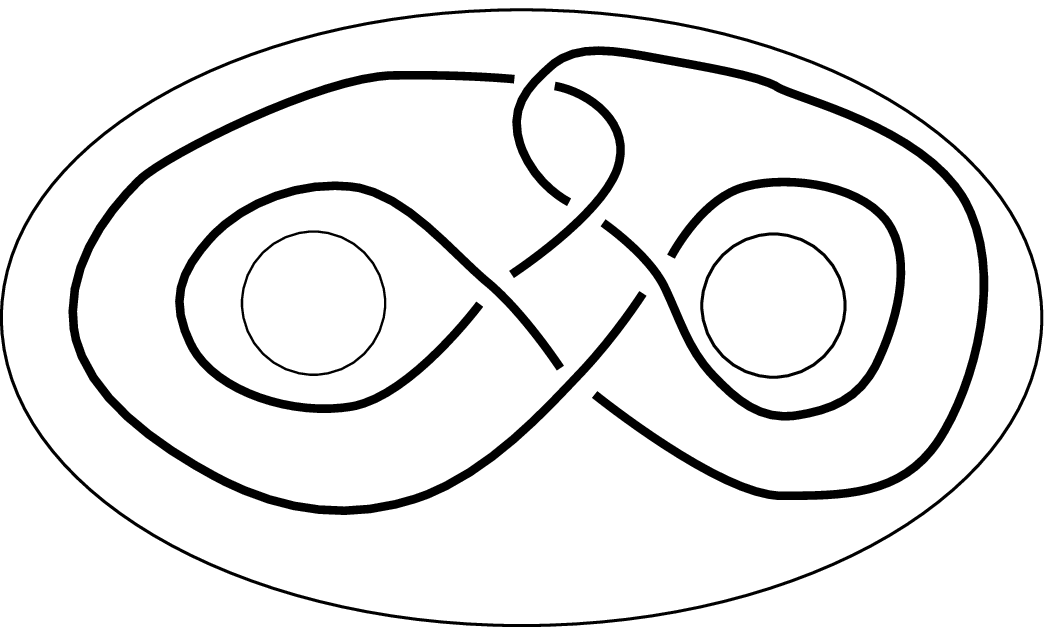} & \parbox[c]{4.5cm}{
\begin{center}$
\frac{ A^{20} -A^{16} +2A^{12} -A^8 +A^4 -1 }{ A^{11} +A^7 }
$\end{center}
}\\
\hline
(5) & \picw{6.3}{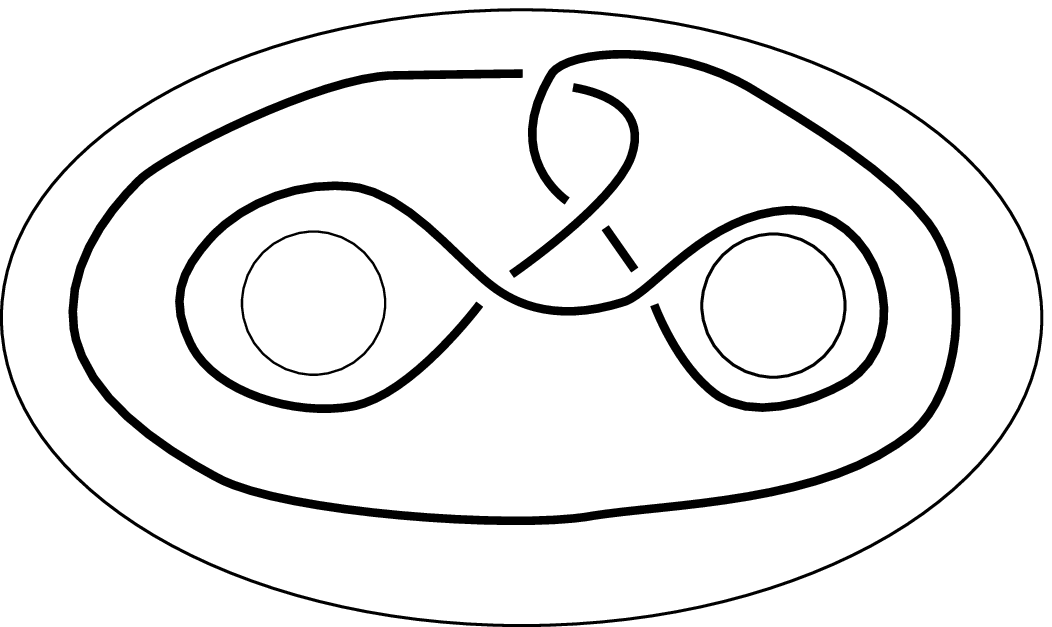} & \parbox[c]{4.5cm}{
\begin{center}$
\frac{ A^{12} -A^{10} +2A^8 +3A^4 +A^2 -1 }{ A^8 +A^4 } 
$\end{center}
}\\
\hline
\end{array}
$$

$$
\begin{array}{|c|c|c|}
\hline
(6) & \picw{6.3}{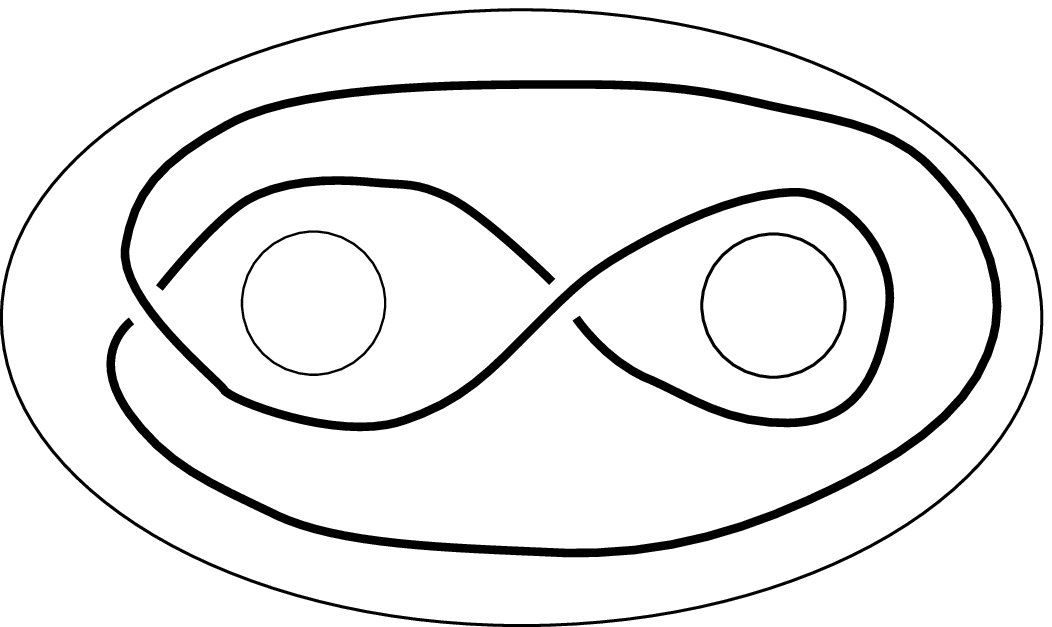} & \parbox[c]{4.5cm}{
\begin{center}$
\frac{1}{ -A^2 -A^{-2} } 
$\end{center}
}\\
\hline
(7) & \picw{6.3}{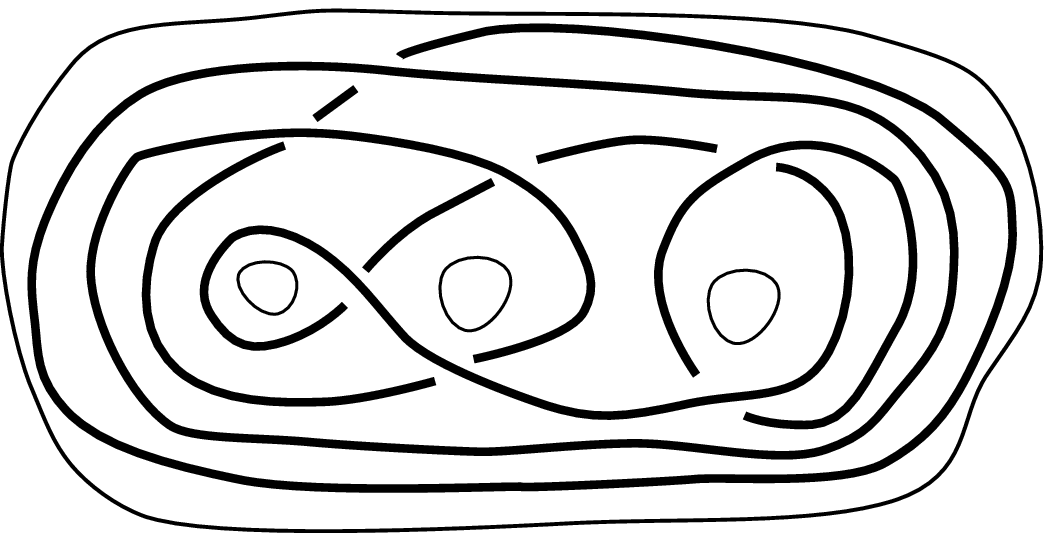} & \parbox[c]{4.5cm}{
\begin{center}$
( -A^{22} -A^{20} -A^{18} +2A^{14} +5A^{12} +3A^{10} +5A^8 +A^6 +4A^4 +1 )/( A^{15} +2A^{11} +A^7) 
$\end{center}
}\\
\hline
(8) & \picw{6.3}{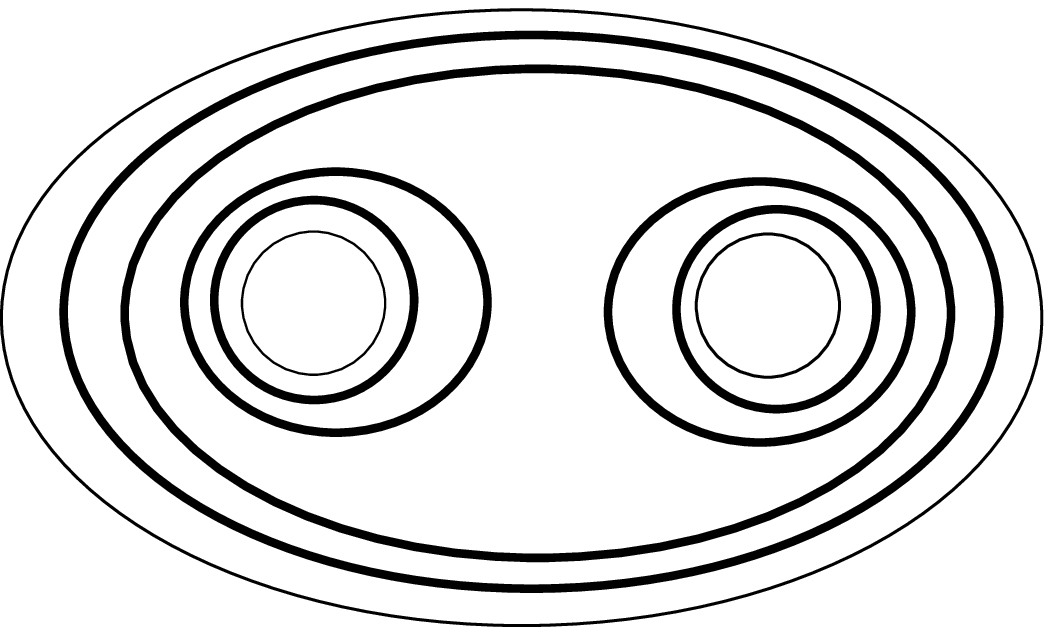} & \parbox[c]{4.5cm}{
\begin{center}$
\frac{ A^8 +2A^4 +1 }{ A^8 +A^4 +1 } 
$\end{center}
}\\
\hline
(9) & \picw{6.3}{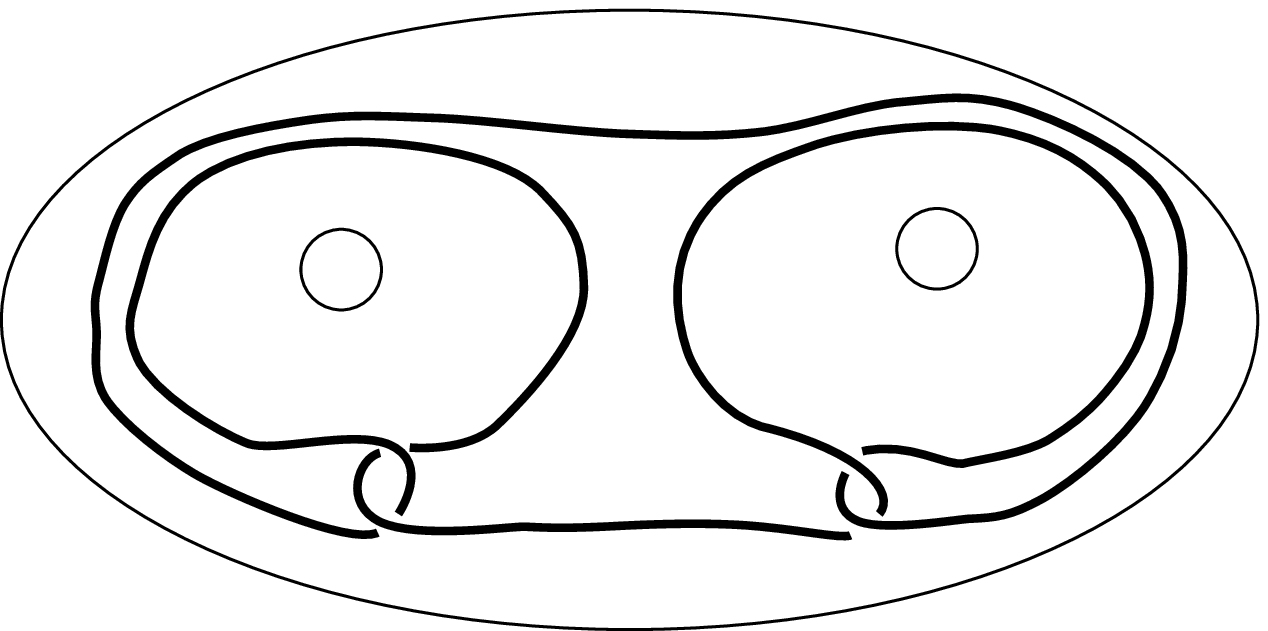} & \parbox[c]{4.5cm}{
\begin{center}$
\frac{-A{16} -2A^8 -1}{ A^{10} +A^6 }
$\end{center}
}\\
\hline
(10) & \picw{6.3}{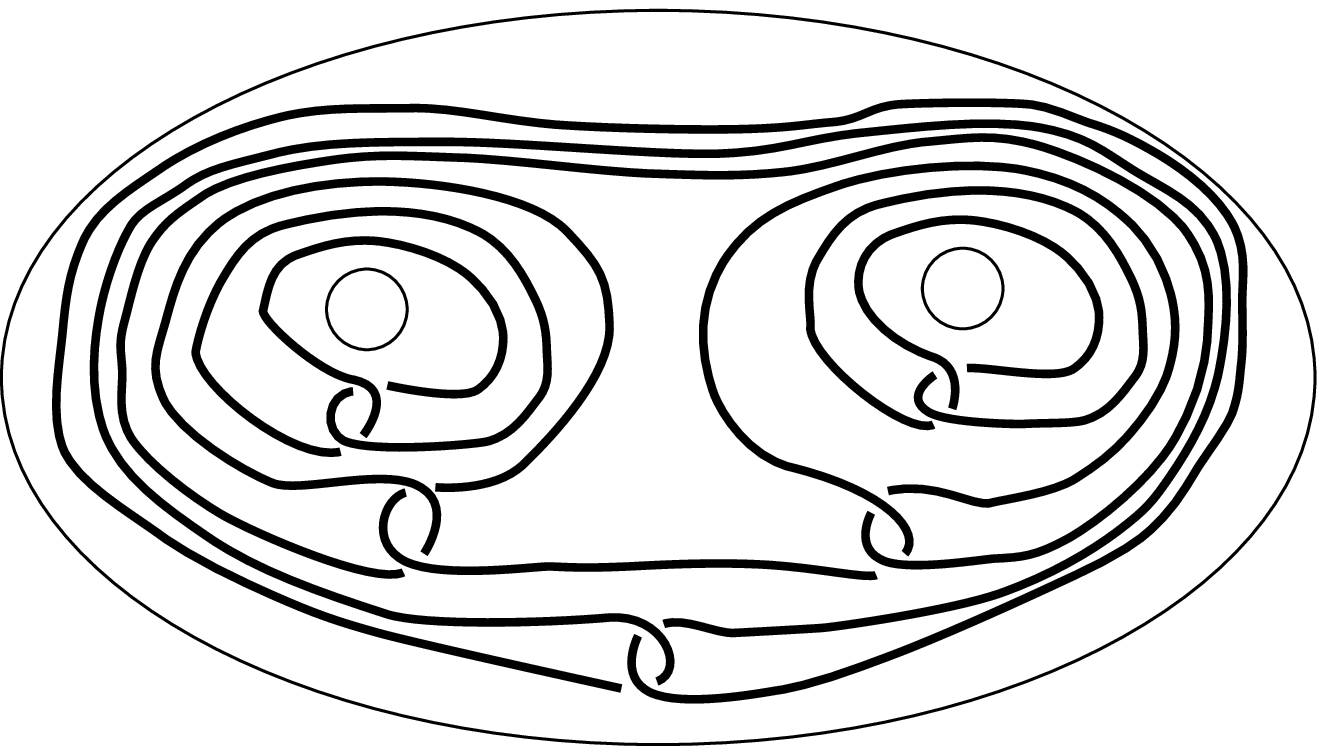} & \parbox[c]{4.5cm}{
\begin{center}$
( 8A^{56} - 15A^{52} + 46A^{48} - 45A^{44} + 106A^{40} -472A^{36} + 97A^{32} - 24A^{28} + 47A^{24} - 14 A^{20} + 22A^{16} - A^{12} + 9A^8 + 2A^4 +1 )/( A^{38} +2A^{34} + 3A^{30} + 3A^{26} + 2A^{22} +A^{18} )
$\end{center}
}\\
\hline
\end{array}
$$

$$
\begin{array}{|c|c|c|}
\hline
(11) & \picw{6.3}{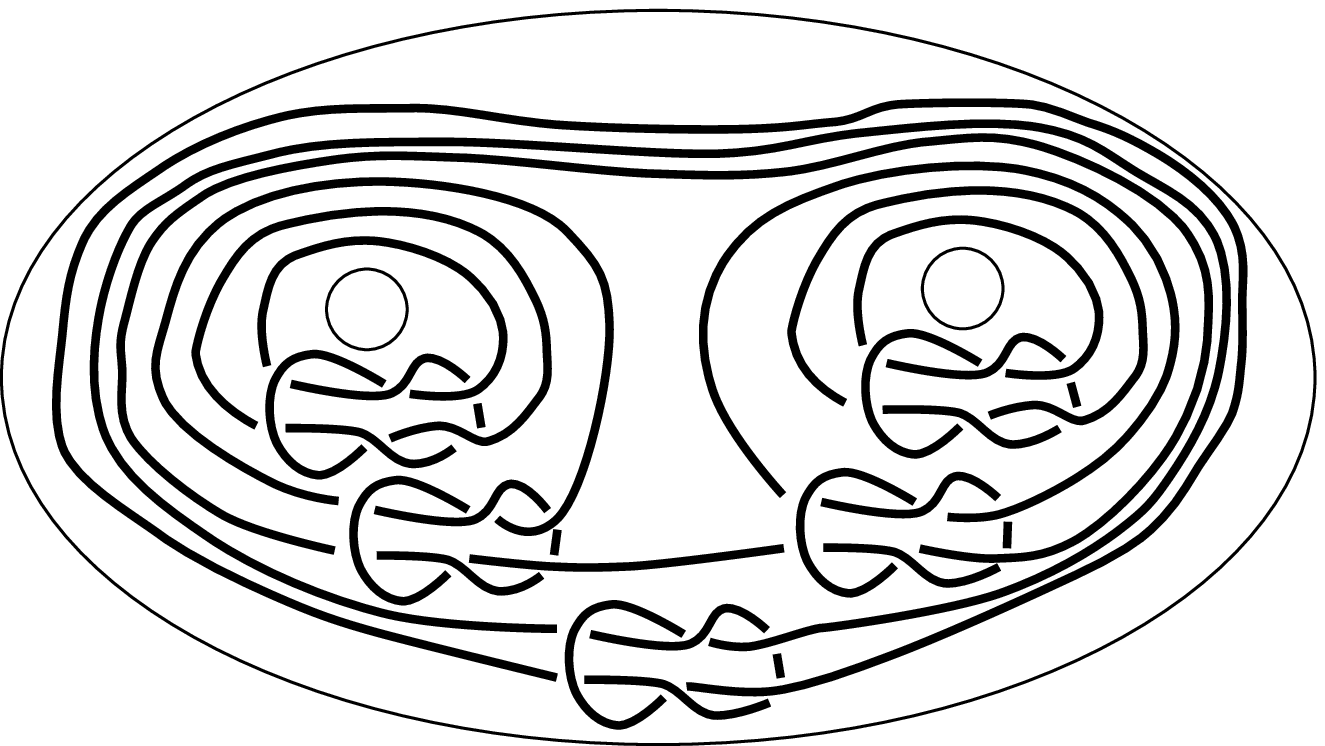} & \parbox[c]{4.5cm}{
\begin{center}$
( -A^{104} + 7A^{100} - 20 A^{96} + 42 A^{92} - 69 A^{88} + 61 A^{84} + 11 A^{80} - 124 A^{76} + 260 A^{72} - 317 A^{68} + 154 A^{64} + 87 A^{60} - 323 A^{56} + 512 A^{52} - 323 A^{48} + 87 A^{44} + 154 A^{40} - 317 A^{36} +260 A^{32} - 124 A^{28} + 112 A^{24} + 61 A^{20} - 69 A^{16} + 42 A^{12} - 20 A^8 + 7A^4 -1 )/( A^{56} + A^{52} + A^{48} )
$\end{center}
}\\
\hline
\end{array}
$$
\end{ex}

\subsection{Shadows}\label{subsec:Shadows}
In this subsection we briefly introduce the notion of ``\emph{shadow}'' and ``\emph{shadow formula}''. They provide us a useful tool to compute quantum invariants. They were introduced by Turaev \cite{Turaev}. We can refer to \cite{Carrega-Martelli, Costantino0, Turaev} for these notions. In particular  we can find a proof of the shadow formula for the Kauffman bracket in \cite[Section 6]{Carrega-Martelli}. 

A \emph{simple polyhedron} $X$ is a connected 2-dimensional compact polyhedron where every point has a neighborhood homeomorphic to one of the five types (1-5) shown in Fig.~\ref{figure:models}. The five types form subsets of $X$ whose connected components are called \emph{vertices} (1), \emph{interior edges} (2), \emph{regions} (3), \emph{boundary edges} (4), and \emph{boundary vertices} (5). The points (4) and (5) altogether form the \emph{boundary} $\partial X$ of $X$. 

\begin{figure}[htbp]
\begin{center}
\includegraphics[width = 12 cm]{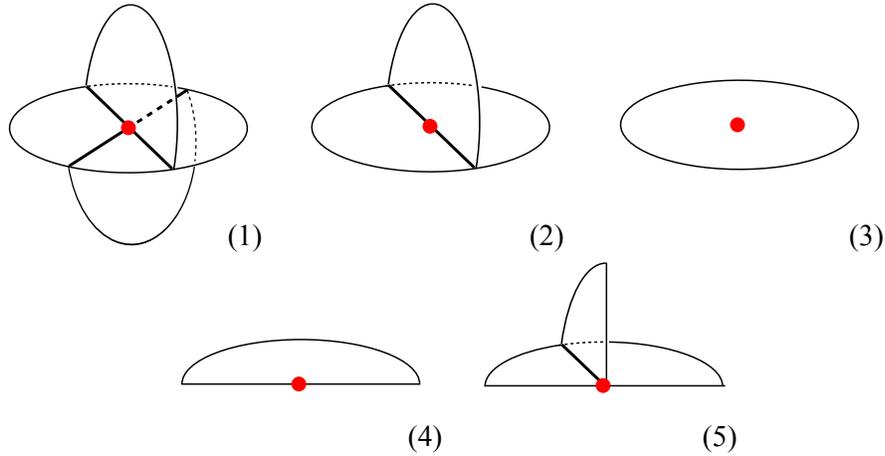}
\caption{Neighborhoods of points in a simple polyhedron.}
\label{figure:models}
\end{center}
\end{figure}

A \emph{shadow} of an oriented compact 4-manifold $W$ is a simple polyhedron $X$ properly embedded in $W$, $X\subset W$, $X \cap \partial W = \partial X$, such that it is locally flat and $W$ collapses onto $X$. The boundary $\partial X$ is equipped with a surface in $\partial W$ that collapses on it, we call this surface \emph{framing}. Let $L$ be a framed link in an oriented closed 3-manifold $M$. A \emph{shadow} $X$ of $(L,M)$ is a shadow of a 4-manifold $W$ such that $\partial W = M$ and $\partial X= L$. Every link $L	\subset \#_g(S^1\times S^2)$ has a shadow that collapses onto a graph.

The embedding of a shadow in the 4-manifold provides a half-integer ${\rm gl}(R)\in \frac 1 2 \Z$ to each region $R$ of $X$ called \emph{gleam}. Gleams generalize the Euler number of $D^2$-bundles over surfaces. A shadow is completely determined (up to diffeomorphism of the 4-manifold) by its topology and its gleams. 

We can get all quantum invariants from shadows with what are called \emph{shadow formulas}. They look like Euler characteristics: they are composed by elementary bricks associated to the maximal connected pieces of dimension $0$ (vertices), $1$ (edges) and $2$ (regions) of the shadow, and they are combined together with a ``sign'' depending on the parity of the dimension. 

A triple $(a,b,c)$ of non negative integers is \emph{admissible} if they satisfy the triangular inequalities: $a\leq b+c$, $b\leq c+a$, $c\leq a+b$, and the sum is even $a+b+c\in 2\Z$. An \emph{admissible coloring} $\xi$ of a shadow $X$ is the assignment of a non negative integer $\xi(R)$ (a color) to each region $R$ of $X$ such that for each edge $e$ the colors of the three regions adjacent to $e$ form an admissible triple. If the boundary $\partial X$ has a coloring (a non negative integer associated to each edge of $X$), we say that a coloring $\xi$ of $X$ extends the boundary coloring if for every region $R$ adjacent to the boundary edge $e_\partial$ the color $\xi(R)$ is equal to the color of $e_\partial$. If the set ${\rm Adm}(X)$ of the admissible colorings of $X$ that extend the boundary coloring is non empty, we have that ${\rm Adm}(X)$ is finite if and only if $X$ collapses onto a graph.

Let $X\subset W$ be a shadow that collapses onto a graph of genus $g$ whose boundary has no vertices, namely it is a framed link $L=\partial X$. The ambient oriented 4-manifold $W$ must collapse onto the graph, hence it must be the 4-dimensional handlebody of genus $g$ (the oriented compact 4-manifold with a handle-decomposition with just $k>0$ 0-handles and $h-g$ 1-handles), therefore $\partial W= \#_g(S^1\times S^2)$. Suppose that $L$ has a fixed coloring, for instance all the components are colored with $1$. The \emph{shadow formula} says that the Kauffman bracket of $L\subset \#_g(S^1\times S^2)$ is equal to
$$
\langle L \rangle = \sum_\xi \frac{\prod_f \cerchio_f^{\chi(f)}A_f \prod_v \tetra_v }
{\prod_e \teta_e^{\chi(e)} }  .
$$
Here the sum is taken over all the admissible colorings of $X$ that extend the boundary coloring and the product is taken on all regions $f$, inner edges $e$, inner vertices $v$. The symbols
$$
\cerchio_f,\ \teta_e, \  \tetra_v
$$
denote the skein element of these graphs in $K(S^3)=\mathbb{Q}(A)$, colored as $f$, or as the regions incident to $e$, or $v$. The \emph{phase} $A_f$ is a value depending on the color $\xi(f)$ and the gleam $\gl(f)$ of the region $f$:
$$
A_f:= (-1)^{\gl(f)\xi(f)} A^{-\gl(f)\xi(f)( \xi(f) +2 )} ,
$$
Note that if the gleam of $f$ is null $A_f=1$.

We are going to use diagrams, skein relations and the shadow formula to easily compute the Kauffman bracket of links in $\#_g(S^1\times S^2)$.

\section{The extended Tait conjecture}

In this section we provide more notions and results in order to state and prove the main theorems.

In $S^3$ all link diagrams with the minimal number of crossings are reduced. This statement has been discussed in \cite{Carrega_Tait1} for links in $S^1\times S^2$. Clearly the diagrams in $S_{(g)}$ with the minimal number of crossings do not have crossings as the ones of Fig.~\ref{figure:reducedD_g} (there is no disk $B$ embedded in $S_{(g)}$ whose boundary intersects the diagram in just one crossing). For $g\geq 1$ we can not remove all the crossings that are adjacent to two external regions (Fig.~\ref{figure:reducedD2_g}). For $g=1$ we have the definition of \emph{quasi-simple} diagram that is obtained relaxing the definition of simple admitting at most one crossing adjacent to two external regions (Fig.~\ref{figure:reducedD2_g}). We have that every diagram in the annulus of a link in $S^1\times S^2$ that has the minimal number of crossings is quasi-simple. Unfortunately we can not get the analogous result for $g\geq 2$ with a so easy definition of quasi-simple. However most links do not have diagrams with crossings adjacent to two external regions: if the represented link does not intersect twice any non separating sphere the diagrams with the minimal number of crossings are simple.

A dichotomy between links in $S^1\times S^2$ is shown in \cite{Carrega_Tait1}: if the link is $\Z_2$-homologically trivial the proper version of the Tait conjecture holds, otherwise it does not. Following the method of \cite[Subsection 3.1]{Carrega_Tait1} we can show that the diagrams in Fig.~\ref{figure:no_Tait_g} represent the same knot $K\subset \#_g(S^1\times S^2)$ once a proper embedding of $S_{(g)}$ in a handlebody of the H-decomposition is fixed. The knot $K$ is $\Z_2$-homologically non trivial and the diagrams are both alternating and simple, but they have a different number of crossings. Therefore even in the general case of links in $\#_g(S^1\times S^2)$ the natural extension of the Tait conjecture does not hold for $\Z_2$-homologically non trivial links. 

We can start to see this dichotomy given by the $\Z_2$-homology class also by looking at the Kauffman bracket. In fact \cite[Proposition 3.9]{Carrega_Tait1} shows that if a link in $S^1\times S^2$ is $\Z_2$-homologically non trivial its Kauffman bracket is null. To prove that we just needed three well known relations in skein theory (two of them are pictured in Fig.~\ref{figure:sphere}). We have the analogous result in $\#_g(S^1\times S^2)$, but to get it we need more complicated tools (shadows or trivalent colored graphs):
\begin{prop}\label{prop:0Kauff_g}
Let $L\subset \#_g(S^1\times S^2)$ be a framed link. Suppose that $L$ is $\Z_2$-homologically non trivial
$$
[L] \neq 0 \in H_1(\#_g(S^1\times S^2); \Z_2) .
$$
Then
$$
\langle L \rangle = 0 .
$$
\begin{proof}
Let $X$ be a shadow of $(L, \#_g(S^1\times S^2))$ collapsing onto a graph. The 4-dimensional thickening of $X$ is the 4-dimensional handlebody of genus $g$, $W_g$. There is a graph $\Gamma \subset W_g$ such that $W_g\setminus \Gamma$ is a collar of the boundary, namely it is diffeomorphic to $\#_g(S^1\times S^2) \times [0,1) $. The homology class of $L$ in $H_1(W_g ;\Z_2)$ is $0$ if and only if $L$ bounds a surface $S\subset W_g$. By transversality we can suppose that $S$ does not intersect $\Gamma$. Hence $L$ is $\Z_2$-homologically trivial in $W_g$ if and only if it is so in $\#_g(S^1\times S^2) \times [0,1)$, thus in $\#_g(S^1\times S^2)$. 

Given an admissible coloring $\xi$ of $X$ that extends the one of $L$ (every component is colored with $1$), the regions having odd colors form a surface $S_\xi \subset W_g$ bounded by $L$.

 By hypothesis $L$ is $\Z_2$-homologically non trivial in $\#_g(S^1\times S^2)$. Hence for what said above a surface like $S_\xi$ can not exists. Therefore there are no admissible colorings of $X$ that extend the one of $L$. Hence by the shadow formula $\langle L \rangle = 0$.
\end{proof}
\end{prop}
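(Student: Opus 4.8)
The plan is to exploit the shadow formula, reducing the vanishing of $\langle L \rangle$ to the emptiness of the set of admissible colorings. First I would fix a shadow $X$ of $(L, \#_g(S^1\times S^2))$ that collapses onto a graph; by the discussion of Subsection~\ref{subsec:Shadows} its $4$-dimensional thickening is the genus-$g$ handlebody $W_g$, with $\partial W_g = \#_g(S^1\times S^2)$ and $L = \partial X$ colored with $1$. Since the shadow formula writes $\langle L \rangle$ as a sum over the admissible colorings $\xi$ of $X$ that extend this boundary coloring, it suffices to prove that ${\rm Adm}(X)$ extending the boundary coloring is empty.

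Next I would relate the two relevant homology groups. The manifold $W_g$ collapses onto a spine graph $\Gamma$, and $W_g\setminus\Gamma$ is a collar $\#_g(S^1\times S^2)\times[0,1)$ of the boundary. Thus a class in $H_1(\#_g(S^1\times S^2);\Z_2)$ bounds in $\#_g(S^1\times S^2)$ if and only if it bounds in $W_g$: given a bounding surface in $W_g$, transversality lets me push it off $\Gamma$ and into the collar, hence into the boundary. In particular the hypothesis $[L]\neq 0$ in $\#_g(S^1\times S^2)$ yields $[L]\neq 0$ in $W_g$, so $L$ bounds no embedded surface in $W_g$.

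The heart of the argument is the correspondence between admissible colorings and bounding surfaces. Given an admissible $\xi$, I would set $S_\xi$ to be the union of the closures of the regions whose color is odd. The admissibility condition $a+b+c\in 2\Z$ at each interior edge forces an even number (zero or two) of the three incident regions to carry an odd color, so along an interior edge the odd-colored regions either both pass through or are both absent; a parity count at the true vertices (where the four triple edges constrain the six region germs) then confirms that $S_\xi$ is a surface away from $\partial X$. Along the boundary edges the adjacent region has color $1$, so $\partial S_\xi = L$, and thus every admissible coloring produces a surface in $W_g$ bounded by $L$.

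Combining the steps finishes the proof: since $L$ is $\Z_2$-homologically non-trivial in $\#_g(S^1\times S^2)$, it is non-trivial in $W_g$, so no surface $S_\xi$ can exist and the set of admissible colorings extending the boundary coloring is empty. The shadow formula is then an empty sum, whence $\langle L \rangle = 0$. I expect the main obstacle to be the verification that $S_\xi$ is a genuine embedded surface, in particular that it is a manifold across the true vertices of the shadow: the interior-edge case is immediate from admissibility, but the vertex case requires tracing through the local model (the cone over the $1$-skeleton of a tetrahedron), and it is there that the parity bookkeeping must be carried out with care.
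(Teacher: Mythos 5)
Your proposal is correct and follows essentially the same route as the paper: reduce to the shadow formula, show that an admissible coloring extending the boundary coloring would yield a surface $S_\xi\subset W_g$ (the union of the odd-colored regions) bounded by $L$, transfer the non-triviality of $[L]$ from $\#_g(S^1\times S^2)$ to $W_g$ via the collar of the spine, and conclude that the sum is empty. Your extra care about verifying that $S_\xi$ is a surface along interior edges and at true vertices is a detail the paper leaves implicit, but it is not a divergence in method.
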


\begin{figure}
\begin{center}
\includegraphics[width = 10 cm]{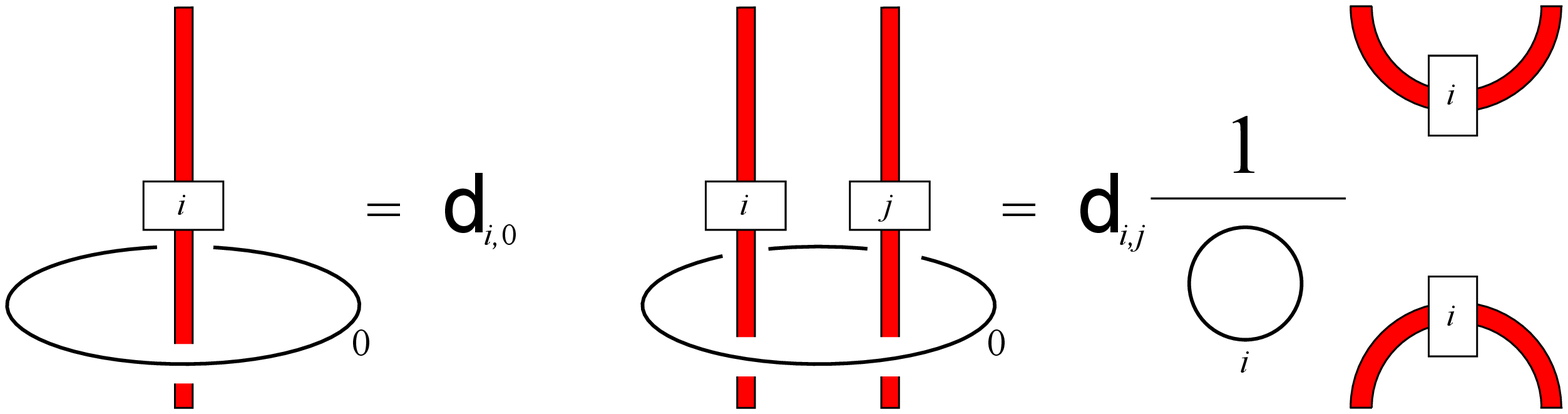}
\caption{Sphere intersection. The symbol $d_{i,j}$ is the Kronecker delta.}
\label{figure:sphere}
\end{center}
\end{figure}

We recall the notion of ``\emph{state}'' and the following notations:
\begin{defn}
Let $D$ be a link diagram in $S_{(g)}$. A \emph{Kauffman state} of $D$, or simply a \emph{state}, is a function $s$ from the set of crossings of $D$ to $\{1,-1\}$. The assignment of $\pm 1$ to a crossing determines a unique way to remove that crossing as described in Fig.~\ref{figure:splitting}. Hence a state removes all the crossings producing a finite collection of non intersecting circles in the surface. This collection  of circles is called the \emph{splitting}, or the \emph{resolution}, of $D$ with $s$. We denote with 
\begin{itemize}
\item{$s_+$ and $s_-$ the constant states that assign respectively always $+1$ and always $-1$;}
\item{$sD$ the number of homotopically trivial components of the splitting of $D$ with the state $s$;}
\item{$\sum s(i)$ for the sum over the crossings of the signs assigned by the state $s$;}
\item{$D_s$ for the diagram obtained removing the homotopically trivial components from the splitting of $D$ with $s$;}
\item{$g(D)$ for the minimal number of holes of a punctured disk containing $D$ and contained in $S_{(g)}$.}
\end{itemize}
\end{defn}

\begin{figure}[htbp]
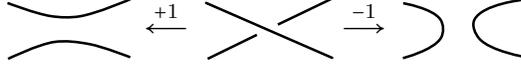

$$
\pic{1.6}{0.4}{Acanalep.eps} \ \stackrel{+1}{\longleftarrow} \ \pic{1.6}{0.4}{incrociop.eps} \ \stackrel{-1}{\longrightarrow} \ \pic{1.6}{0.4}{Bcanalep.eps}
$$
\caption{The splitting of a crossing.}
\label{figure:splitting}
\end{figure}

We have the usual notion of ``\emph{adequacy}'':
\begin{defn}
A link diagram $D\subset S_{(g)}$ is \emph{plus-adequate} (resp. \emph{minus-adequate}) if $s_+D > s_+D'$ ($s_-D > s_-D'$) for every diagram $D'$ obtained changing the over/underpass of one crossing.
\end{defn}

\begin{prop}\label{prop:reduced_D_g}
Fix an embedding of $S_{(g)}$ in a handlebody of the H-decomposition of $\#_g(S^1\times S^2)$. Every simple, alternating, connected link diagram $D\subset S_{(g)}$ that represents a $\Z_2$-homologically trivial link in $\#_g(S^1\times S^2)$ is adequate.
\begin{proof}
The proof of the case $g=1$ \cite[Proposition 3.14]{Carrega_Tait1} applies as is also here. 
\end{proof}
\end{prop}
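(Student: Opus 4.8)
The plan is to recover adequacy from the classical checkerboard picture of Thistlethwaite--Kauffman--Murasugi, carried out inside $S_{(g)}$, while keeping track of the two features that are new in a punctured disk: state circles may be homotopically nontrivial, and regions may be external. I would treat plus-adequacy, the minus case being identical after exchanging the two colors. First I would use the hypothesis that $L$ is $\Z_2$-homologically trivial to checkerboard color the regions of $S_{(g)}\setminus D$, i.e. to color them black and white so that regions meeting along an edge receive opposite colors; such a coloring exists precisely because the $\Z_2$-cycle carried by $D$ is null-homologous in $S_{(g)}$, which is the surface-level version of the hypothesis $[L]=0$ (the same mechanism that produces the spanning surface $S_\xi$ in the proof of Proposition~\ref{prop:0Kauff_g}). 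Since $D$ is alternating, a check at a single crossing shows that the $A$-smoothing everywhere joins the two white corners, so the circles of the all-$A$ resolution $s_+$ are exactly the boundary curves of the black regions, and each crossing $c$ is an edge of the black Tait graph joining the black region(s) incident to $c$.

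Next I would translate plus-adequacy into a statement about this graph. Flipping the over/underpass at $c$ re-smooths $c$ from $A$ to $B$, so $s_+D'$ counts the homotopically trivial circles of the all-$A$ resolution after re-smoothing $c$ alone; if the two black corners at $c$ lie on distinct black regions the two corresponding state circles merge into one, while if they lie on the same black region the corresponding state circle splits into two. I would then enumerate the ways the strict inequality $s_+D > s_+D'$ can fail: (i) the splitting of a homotopically trivial state circle, which is exactly a crossing cut off by a disk $B\subset S_{(g)}$ meeting $D$ in $c$ only (Fig.~\ref{figure:reducedD_g}); and (ii) a merge of two homotopically nontrivial state circles, or the split of a nontrivial one, which I would show forces $c$ to be adjacent to two external regions, or adjacent twice to a single external region (Fig.~\ref{figure:reducedD2_g}). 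The definition of \emph{simple} forbids all of these, connectedness of $D$ guarantees that every state circle bounds one of the colored regions so that the local analysis is exhaustive, and hence every single-crossing flip strictly lowers the number of homotopically trivial circles.

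The delicate point, and the step I expect to be the real obstacle, is precisely this bookkeeping of homotopically trivial circles rather than all circles. A merge or a split can change the homotopy type of the curves involved, and $s_+D$ records only the trivial ones, so the classical ``no nugatory crossing'' dichotomy has to be refined: I must verify that the only configurations in which the trivial count fails to strictly drop are the three excluded by simplicity, and in particular that the two external-region conditions are the correct boundary analogue of the nugatory condition. Once this case analysis is in place the proposition follows exactly as in the annular case $g=1$ of \cite[Proposition 3.14]{Carrega_Tait1}.
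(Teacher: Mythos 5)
Your plan is indeed the approach the paper intends: the printed proof here is only the sentence that the $g=1$ argument of \cite[Proposition 3.14]{Carrega_Tait1} ``applies as is'', and that argument is the checkerboard/state-circle case analysis you describe. Two corrections to your setup before the main point. First, a checkerboard coloring of a connected diagram in the planar surface $S_{(g)}$ \emph{always} exists (the mod $2$ intersection form of a planar surface vanishes); what the hypothesis $[L]=0$ actually buys is the normalization in which all external regions receive the same color, equivalently $D$ bounds a union of internal regions. Second, with that normalization the two colors are not symmetric: one of the two states sees only circles coming from internal regions, so ``the minus case is identical after exchanging colors'' is not literally right (it is in fact the easier case).

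The genuine gap is in your case (ii), precisely the step you flag as the obstacle. To conclude that a merge of two homotopically nontrivial state circles, or the split of a nontrivial one, forces $c$ to be adjacent to two external regions or twice to one, you need to know that \emph{homotopically nontrivial state circles arise only from external regions}. The missing lemma, which you never state, is: for a connected diagram $D$ in the planar surface $S_{(g)}$, every internal region is a disk and every region has exactly one boundary circuit lying on $D$. This holds because every simple closed curve in a planar surface separates, so a region with two boundary circuits on $D$ would split $D$ into two nonempty pieces lying in disjoint complementary components, contradicting connectedness; this is the fact that makes the $g=1$ proof carry over to $g\geq 2$, where regions could a priori be punctured disks. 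With it the analysis closes: state circles are in bijection with regions of the relevant color; a self-touching at a circle bounding an internal (disk) region yields a curve inside a closed disk, hence the forbidden disk $B$ of Fig.~\ref{figure:reducedD_g} (your ``exactly'' in case (i) also needs the external variant: a trivial circle bounding an external region can self-touch along an essential curve, which is excluded by the ``adjacent twice to one external region'' clause rather than by the disk clause); a merge involving at least one trivial circle drops the count of trivial circles by exactly one; and the only remaining failures put both corners of the relevant color in external regions, which is exactly what ``simple'' forbids via Fig.~\ref{figure:reducedD2_g}. Without the separation lemma the dichotomy you assert is unjustified, and the proof is not complete as written.
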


\begin{defn}\label{defn:split_homotopic_genus}
A link $L\subset \#_g(S^1\times S^2)$ is \emph{H-split} if there is a non connected diagram $D\subset S_{(g)}$ that represent $L$ by some embedding of $S_{(g)}$ in a handlebody of the H-decomposition of $\#_g(S^1\times S^2)$ (non connected as a planar graph).

The \emph{homotopic genus} of $L$ is the minimum of $g(D)$ as $D$ varies among all diagrams of $L$ in any embedded disk with $g$ holes.
\end{defn}

\begin{prop}\label{prop:homot_genus}
The homotopic genus of a link $L\subset \#_g(S^1\times S^2)$ is equal to the minimum $g'$ such that the complement has a connected sum decomposition of the form 
$$
\#_g(S^1\times S^2)\setminus L = (\#_{g'}(S^1\times S^2)\setminus L') \# (\#_{g-g'}(S^1\times S^2))
$$
for some link $L'\subset \#_{g'}(S^1\times S^2) $.
\begin{proof}
If $L$ has homotopic genus $g'\leq g$ there is a diagram $D\subset S_{(g)}$ that represents $L$ via some embedding of $S_{(g)}$ that is contained in a disk with $g'$ holes lying in $S_{(g)}$. Hence there is a factor $\#_{g-g'}(S^1\times S^2)$ in a connected sum decomposition of the complement of $L$. On the other hand if we have a decomposition $\#_g(S^1\times S^2)= (\#_{g'}(S^1\times S^2)\setminus L') \# (\#_{g-g'}(S^1\times S^2))$, we can get a diagram $D\subset S_{(g)}$ representing $L$ via some embedding of $S_{(g)}$ by adding $g-g'$ 1-handles to $S_{g'}$ equipped with a diagram $D'\subset S_{(g')}$ of $L'$. This implies that the homotopic genus of $L$ is at most $g'$.
\end{proof}
\end{prop}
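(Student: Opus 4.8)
The plan is to prove the claimed equality by comparing two quantities: the homotopic genus $h$ of $L$, defined as $\min_D g(D)$, and the number $m$ defined as the least $g'$ for which the complement splits as $\#_g(S^1\times S^2)\setminus L = (\#_{g'}(S^1\times S^2)\setminus L') \# (\#_{g-g'}(S^1\times S^2))$ for some $L'$. I would show $m\le h$ and $h\le m$ separately, each time translating between a diagram confined to a sub-punctured-disk and a connected-sum splitting that peels off the handles unused by $L$.

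First I would establish $m\le h$. Choosing a diagram $D\subset S_{(g)}$ realizing the homotopic genus, by definition $D$ is contained in a sub-surface $S_{(h)}\subset S_{(g)}$ with $h$ holes. Thickening, $L$ lies in the sub-handlebody $H_h = S_{(h)}\times[-1,1]\subset H_g$, so the remaining $g-h$ one-handles of $H_g$ are disjoint from $L$. Doubling $H_g$---which is $H_h$ with $g-h$ extra one-handles attached---exhibits $\#_g(S^1\times S^2)$ as $\#_h(S^1\times S^2) \# \#_{g-h}(S^1\times S^2)$, where the second factor is the double of the unused handles and hence avoids $L$. Cutting along the connect-sum spheres then yields exactly the splitting with $g'=h$ and $L'=L$ regarded inside $\#_h(S^1\times S^2)$, so $m\le h$.

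For the reverse inequality $h\le m$, I would start from a decomposition realizing $m$ and reconstruct a low-genus diagram. The link $L'\subset\#_m(S^1\times S^2)$ admits some diagram $D'\subset S_{(m)}$ by the general diagrammatic theory recalled above. Including $S_{(m)}\hookrightarrow S_{(g)}$ as a sub-punctured-disk and filling the remaining $g-m$ holes with empty handles, the extra handles double to the $\#_{g-m}(S^1\times S^2)$ factor and carry no strand of the diagram. One then checks that the resulting diagram $D\subset S_{(g)}$ represents $L$ and satisfies $g(D)\le m$, whence $h\le m$. Combining the two inequalities gives $h=m$.

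The main obstacle is the topological bookkeeping in this reverse direction: I must argue that reinstating $L'$ into the summand $\#_m(S^1\times S^2)$ and attaching the $g-m$ empty handles genuinely reproduces $L\subset\#_g(S^1\times S^2)$ up to isotopy, not merely a link with the same complement. This relies on the fact that the splitting is along separating spheres bounding the link-free $\#_{g-m}(S^1\times S^2)$ factor, so the link-carrying side is unaffected, together with the uniqueness of the H-decomposition (Theorem~\ref{theorem:Heegaard_split_S1xS2}) to identify the embedded punctured disks consistently. The forward direction, by contrast, is essentially routine once the handle picture is in place.
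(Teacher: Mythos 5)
Your proposal is correct and follows essentially the same route as the paper: both directions are proved by translating between a diagram confined to a sub-punctured-disk $S_{(g')}\subset S_{(g)}$ and a connected-sum splitting that peels off the $g-g'$ handles disjoint from the link, with the reverse direction obtained by attaching empty $1$-handles to a diagram of $L'$. Your extra care about why the reconstructed link is $L$ up to isotopy (via the separating spheres and the uniqueness of the H-decomposition) only makes explicit what the paper leaves implicit.
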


\begin{rem}
By Proposition~\ref{prop:homot_genus} if a link $L\subset \#_g(S^1\times S^2)$ has homotopic genus $g'<g$, its complement $\#_g(S^1\times S^2)\setminus L$ is reducible, in particular it is not hyperbolic. By Proposition~\ref{prop:homot_genus} and Remark~\ref{rem:tensor} the links with homotopic genus $g'\leq g$ can be seen as links in $\#_{g'}(S^1\times S^2)$, hence the really interesting links in $\#_g(S^1\times S^2)$ are the ones with homotopic genus $g$.
\end{rem}

We need to extend the notion of ``\emph{breadth}'' from Laurent polynomials to rational functions:
\begin{defn}\label{defn:breadth_g}
Let $f = g/h$ be a rational function and $g$ and $h$ be two polynomials. The \emph{degree}, or the \emph{order} in $\infty$, of $f$ is the difference of the maximal degree of the non zero monomials of $g$, $\ord_\infty g$, and the maximal degree of the non zero monomials of $h$, $\ord_\infty h$:
$$
\ord_\infty f := \ord_\infty g -\ord_\infty h .
$$ 
The \emph{order} in $0$ of $f$ is the difference of the minimal degree of the non zero monomials of $g$, $\ord_\infty g$, and the minimal degree of the non zero monomials of $h$, $\ord_0 h$:
$$
\ord_0 f := \ord_0 g -\ord_0 h.
$$
The \emph{breadth} of $f$ is 
$$
B(f):= \ord_\infty f - \ord_0 f = B(g) -B(h).
$$
If $f=0$ we have $\ord_\infty f := -\infty$ and $\ord_0 f:= \infty$ and we define $B(f):=0$.
\end{defn}
Clearly the definitions above do not depend on the choice of $g$ and $h$. If $0\leq \ord_0 f < \infty$, $\ord_0 f$ is equal to the multiplicity of $f$ in $0$ as a zero. If $\ord_0 f \leq 0$, $\ord_0 f$ is the opposite of the order of $f$ in $0$ as a pole. If $-\infty < \ord_\infty f \leq 0$, $\ord_\infty f$ is the opposite of the multiplicity of $f$ in $\infty$ as a zero. If $0 \leq \ord_\infty f$, $\ord_\infty f$ is the order of $f$ in $\infty$ as a pole. Here are some easy and useful properties:
\begin{itemize}
\item{if $f,g\neq 0$ then $B(f/g) = B(f)-B(g)$;}
\item{$\ord_0 f(A)= - \ord_\infty f(A^{-1})$;}
\item{if $f$ is \emph{symmetric} (for all $A$ we have $f(A)=f(A^{-1})$), then $\ord_\infty f = -\ord_0 f$;}
\item{$\ord_\infty (f+g) \leq \max\{ \ord_\infty f , \ord_\infty g\}$;}
\item{$\ord_\infty (f+g) = \max\{ \ord_\infty f , \ord_\infty g\}$ if $\ord_\infty f \neq \ord_\infty g$;}
\item{$\ord_0 (f+g) \geq \min\{ \ord_0 f , \ord_0 g\}$;}
\item{$\ord_0 (f+g) = \min\{ \ord_0 f , \ord_0 g\}$ if $\ord_0 f \neq \ord_0 g$;}
\item{$\ord_\infty (f\cdot g) = \ord_\infty f +\ord_\infty g$, $\ord_0 (f\cdot g) = \ord_0 f +\ord_0 g$;}
\item{$\ord_\infty \frac 1 f = -\ord_\infty f$, $\ord_0 \frac 1 f = -\ord_0 f$.}
\end{itemize}
Let $\cerchio_n$ be the Kauffman bracket of a 0-framed homotopically trivial unknot colored with $n$. We have
$$
\cerchio_n = (-1)^n [n+1] , \ \ [n+1] = \frac{A^{2(n+1)} - A^{-2(n+1)} }{ A^2 -A^{-2} } .
$$
We note that
$$
\ord_\infty \cerchio_n = -\ord_0 \cerchio_n = 2n .
$$

Let $D\subset S_{(g)}$ be a link diagram. Using the skein relations we get
$$
\langle D \rangle = \sum_s \langle D |s \rangle ,
$$
where
$$
\langle D| s\rangle := A^{\sum s(i)} (-A^2-A^{-2})^{sD} \langle D_s\rangle .
$$
If the splitting of $D$ with $s$ has only homotopically trivial components, $D_s$ is empty and $\langle D_s\rangle =1$. We can easily get a shadow $X_s$ from $D_s$ attaching to $S_{(g)}$ an annulus along each component of $D_s$ and giving to each region gleam $0$. The boundary $\partial X_s$ of $X_s$ consists of the components of $D_s$ plus the $g+1$ boundary components of $S_{(g)}$. We color the boundary components of $X_s$ corresponding to the components of $D_s$ with $1$ (the remaining boundary components of the annuli attached to the components of $D_s \subset S_{(g)}$), and the other ones with $0$. The polyhedron $X_s$ is a shadow of a framed link in $\#_g(S^1\times S^2)$ that is the union of the link described by $D_s$ and a link colored with $0$. Therefore the shadow formula applied to $X_s$ gives $\langle D_s\rangle$. Hence
$$
\langle D_s \rangle = \sum_\xi \prod_R \cerchio^{\chi(R)}_{\xi(R)} ,
$$
where $\xi$ runs over all the admissible colorings of $X_s$ that extend the coloring of the boundary, $R$ runs over all the regions of $X_s$ (the external regions do not matter because either they are annuli or their color is $0$), $\chi(R)$ is the Euler characteristic of $R$, and $\xi(R)$ is the color of $R$ given by $\xi$. Therefore $\langle D_s \rangle$ is a symmetric function of $A^2$, namely there are two polynomials $f,h\in \Z[q]$ such that 
$$
\langle D_s \rangle = \frac{f|_{q=A^2}}{h|_{q=A^2}}  , \ \ \langle D_s \rangle|_A = \langle D_s \rangle|_{A^{-1}}  .
$$

We define:
\begin{defn}\label{defn:psi}
Let $s$ be a state of the link diagram $D\subset S_{(g)}$. We set
$$
\psi(s):= \frac 1 2 \ord_\infty \langle D_s \rangle = -\frac 1 2 \ord_0 \langle D_s \rangle \ \ \in \Z .
$$
\end{defn}

In particular we get the following:
\begin{prop}
Let $D\subset S_{(g)}$ be a $n$-crossing link diagram. Then there are two polynomials $f,h\in \Z[q]$ such that
$$
\langle D \rangle = \begin{cases}
\frac{f|_{q=A^2}}{h|_{q=A^2}} & \text{if } n \in 2\Z \\
A\cdot \frac{f|_{q=A^2}}{h|_{q=A^2}} & \text{if } n \in 2\Z+1 \\
\end{cases} .
$$
\end{prop}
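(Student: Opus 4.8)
The plan is to read off the claim directly from the state-sum expansion
$$
\langle D \rangle = \sum_s \langle D |s \rangle , \qquad \langle D| s\rangle = A^{\sum s(i)} (-A^2-A^{-2})^{sD} \langle D_s\rangle ,
$$
recorded just above, by controlling the parity of the power of $A$ carried by each factor. Two of the three factors already lie among the Laurent functions of $q=A^2$: the factor $(-A^2-A^{-2})^{sD} = (-(q+q^{-1}))^{sD}$ is a Laurent polynomial in $q$ with integer coefficients, and by the computation preceding the statement $\langle D_s\rangle$ is a symmetric function of $A^2$, i.e. of the form $f_s|_{q=A^2}/h_s|_{q=A^2}$ with $f_s,h_s\in\Z[q]$. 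Hence the only factor that can produce an odd power of $A$ is $A^{\sum s(i)}$, and the whole argument reduces to tracking its exponent.

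First I would observe that $\sum s(i)\equiv n \pmod 2$ for every state $s$, so that this parity is the same for all states and equals the parity of $n$. Indeed, if $p$ of the $n$ crossings receive $+1$ and the remaining $m=n-p$ receive $-1$, then $\sum s(i) = p-m = 2p-n$. If $n$ is even, $2p-n$ is even and $A^{\sum s(i)} = (A^2)^{(2p-n)/2}$ is a (Laurent) power of $q=A^2$; hence each $\langle D|s\rangle$ is a rational function of $q=A^2$, and so is the finite sum $\langle D \rangle$. If $n$ is odd, $2p-n$ is odd, and I would factor $A^{\sum s(i)} = A\cdot (A^2)^{(2p-n-1)/2}$, exhibiting $\langle D|s\rangle$ as $A$ times a rational function of $q=A^2$; summing over the finitely many states, $\langle D\rangle = A\cdot(\text{rational function of } q=A^2)$.

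To finish I would clear denominators: the negative powers of $q$ coming from $(-(q+q^{-1}))^{sD}$ and from the factors $(A^2)^{\bullet}$, together with the denominators $h_s$, can all be absorbed into a single common denominator $h\in\Z[q]$, leaving a numerator $f\in\Z[q]$; this yields exactly the two stated cases. There is no genuine obstacle, since the substantive step — that each $\langle D_s\rangle$ is a symmetric rational function of $A^2$ with integer coefficients, proved via the shadow formula — has already been carried out. The only points requiring care are the elementary parity bookkeeping across the three factors and checking that the prefactor $A$ in the odd case is common to all states, so that it factors out of the entire sum.
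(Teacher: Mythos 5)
Your proof is correct and follows exactly the route the paper intends: the proposition is stated as an immediate consequence of the preceding state-sum expansion and the fact (established via the shadow formula) that each $\langle D_s\rangle$ is a rational function of $A^2$, and your parity bookkeeping $\sum s(i)=2p-n\equiv n\pmod 2$ is precisely the missing elementary step. Nothing further is needed.
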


\section{Proof of the theorems}

In this section we prove Theorem~\ref{theorem:Tait_conj_g} and Theorem~\ref{theorem:Tait_conj_Jones_g}. As in \cite{Carrega_Tait1} we follow the classical proof of Kauffman, K. Murasugi and Thistlethwaite (for instance we can also see \cite[Chapter 5]{Lickorish}) applying some modifications for our case.

The following lemma is extremely useful to manage the quantity $\psi(s)$ and is needed specifically for the $g \geq 2$ case: no similar result is contained in \cite{Carrega_Tait1}.

\begin{lem}\label{lem:psi}
Let $D\subset S_{(g)}$ be a diagram of a $\Z_2$-homologically trivial link in $\#_g(S^1\times S^2)$ and let $s$ be a state of $D$ such that $D_s$ is non empty. Then the following hold:
\begin{enumerate}
\item{there is a unique admissible coloring $\xi_0$ of the shadow $X_s$ that extends the boundary coloring and assigns only the colors $0$ and $1$;}
\item{$$
\psi(s) = \max_\xi \frac 1 2 \ord_\infty \prod_R \cerchio^{\chi(R)}_{\xi(R)} = \max_\xi \sum_R \chi(R) \cdot \xi(R) ;
$$}
\item{$$
\psi(s)= \sum_R \chi(R) \cdot \xi_0(R) .
$$}
\end{enumerate}
\begin{proof}
Every meromorphic function $f:\mathbb{C} \dashrightarrow \mathbb{C}$ is of the form $f(A) =\lambda_f A^{\ord_\infty f} + g(A)$, where $\lambda_f$ is a unique non zero complex number and $g:\mathbb{C} \dashrightarrow \mathbb{C}$ is a unique meromorphic function such that $\ord_\infty g < \ord_\infty f$. The equality $\ord_\infty (f_1+f_2) = \max\{ \ord_\infty f_1 , \ord_\infty f_2\}$ holds if and only if either $\ord_\infty f_1 \neq \ord_\infty f_2$ or $\lambda_{f_1} \neq - \lambda_{f_2}$. For an admissible coloring $\xi$ of the shadow $X_s$ let $f(\xi)$ be its contribution to the shadow formula:  
$$
f(\xi) := \prod_R \cerchio^{\chi(R)}_{\xi(R)} .
$$
We get the second statement by showing that $\lambda_{f(\xi)} = \pm 1$ and the sign does not depend on $\xi$ for each coloring $\xi$. We have 
$$
f(\xi) = (-1)^{\sum_R \chi(R)\xi(R)} \prod_R [\xi(R) +1]^{\chi(R)} ,
$$
where 
$$
[n+1] = \frac{A^{2(n+1)} - A^{-2(n +1)} }{ A^2 - A^{-2} } .
$$

The regions given by a diagram are colored odd or even like in a chessboard. Therefore the parity of the colors of the regions does not depend on the choice of the admissible coloring. Hence $(-1)^{\sum_R \chi(R)\xi(R)}$ does not depend on $\xi$. We have $\lambda_{[n+1]} = 1$. Therefore $(-1)^{\sum_R \chi(R)\xi(R)} \lambda_{f(\xi)} = 1$.

Since the link represented by $D$ is $\Z_2$-homologically trivial and the first skein relation does not change the $\Z_2$-homology class, we have that the link $L_s\subset \#_g(S^1\times S^2)$ represented by $D_s$ is $\Z_2$-homologically trivial. The link $L_s$ is contained in the punctured disk $S_{(g)}$ that is contained in the handlebody $H_g$ whose double is $\#_g(S^1\times S^2)$. Hence $0 = [L_s]= [D_s] \in H_1(S_{(g)};\Z_2)$. Therefore we can find a surface $S_s \subset S_{(g)}$ merging some internal regions of $S_{(g)}$ (given by $D_s$) and whose boundary is $D_s$. We color with $1$ the regions that compose $S_s$ and the annuli attached to the components of $D_s$, and we color with $0$ the remaining ones. We found an admissible coloring of $X_s$ that extends the boundary coloring and assigns just color $0$ and $1$. Such coloring is unique because, as we observed before, the parity of the color of a region does not depend on the admissible coloring. Let $\xi_0$ be such coloring. Since the Euler characteristics of the regions are all non positive and the colorings are all non negative, we have $\sum_R \chi(R)\xi(R) \leq \sum_R \chi(R)\xi_0(R)$ for every admissible coloring $\xi$. Therefore using this and the second statement we get the third statement.
\end{proof}
\end{lem}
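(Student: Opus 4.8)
The plan is to read all three statements off the shadow-formula expansion established just before the lemma,
\[
\langle D_s \rangle = \sum_\xi f(\xi), \qquad f(\xi) := \prod_R \cerchio_{\xi(R)}^{\chi(R)},
\]
where $\xi$ ranges over the admissible colorings of $X_s$ extending the boundary coloring. The backbone is a degree computation for the individual summands. Writing $\lambda_f$ for the coefficient of the top monomial $A^{\ord_\infty f}$ and using $\cerchio_n = (-1)^n[n+1]$ with $\ord_\infty \cerchio_n = 2n$, each factor contributes its top degree, so $\ord_\infty f(\xi) = 2\sum_R \chi(R)\,\xi(R)$ and hence $\tfrac12\ord_\infty f(\xi) = \sum_R \chi(R)\,\xi(R)$. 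This already gives the first equality in statement (2); the content is the second equality, namely that summing the $f(\xi)$ does not lower the top degree.

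The main obstacle, and the only genuinely delicate point, is to rule out cancellation among the leading monomials of the $f(\xi)$. I would control it by computing each leading coefficient explicitly. Factoring
\[
f(\xi) = (-1)^{\sum_R \chi(R)\xi(R)} \prod_R [\xi(R)+1]^{\chi(R)}
\]
and using $\lambda_{[n+1]} = 1$, one finds $\lambda_{f(\xi)} = (-1)^{\sum_R \chi(R)\xi(R)}$. The key structural input is the standard fact that the regions cut out by $D_s$ are checkerboard two-colored, which, once the external regions are colored $0$, pins the \emph{parity} of $\xi(R)$ on every region independently of the choice of $\xi$. Consequently $\sum_R \chi(R)\xi(R) \bmod 2$, and therefore the sign $\lambda_{f(\xi)}$, is the same for all admissible $\xi$. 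Since all the maximal-degree summands then share a common nonzero leading coefficient, they cannot cancel, so $\ord_\infty \langle D_s \rangle = \max_\xi \ord_\infty f(\xi)$, which yields statement (2) after dividing by $2$.

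For statement (1) I would produce $\xi_0$ from the homological hypothesis. Because $L$, and hence the link $L_s$ described by $D_s$, is $\Z_2$-homologically trivial and $L_s$ lies in $S_{(g)} \subset H_g$, we have $[D_s] = 0 \in H_1(S_{(g)};\Z_2)$, so the multicurve $D_s$ bounds a subsurface $S_s \subset S_{(g)}$ that is a union of internal regions. Coloring the regions forming $S_s$ (together with the annuli glued along $D_s$) with $1$ and all remaining regions with $0$ gives an admissible $\{0,1\}$-coloring extending the boundary coloring; admissibility is forced region by region by the opposite-parity condition across each component of $D_s$. Uniqueness is again the parity observation: a $\{0,1\}$-valued coloring is determined on each region by its fixed parity, so at most one such $\xi_0$ exists.

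Finally I would deduce statement (3) from (2) by a minimality argument. Since the components of $D_s$ are homotopically essential, no complementary region is a disk, so $\chi(R) \leq 0$ for every region $R$. For each region the value $\xi_0(R) \in \{0,1\}$ is the smallest nonnegative integer of the prescribed parity, whence $\xi_0(R) \leq \xi(R)$ for every admissible $\xi$; multiplying by $\chi(R) \leq 0$ reverses the inequality, and summing gives $\sum_R \chi(R)\,\xi_0(R) \geq \sum_R \chi(R)\,\xi(R)$ for all $\xi$. Thus $\xi_0$ realizes the maximum in (2), which is precisely statement (3).
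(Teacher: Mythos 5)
Your proposal is correct and follows essentially the same route as the paper: the same leading-coefficient/parity argument via the checkerboard coloring to rule out cancellation for statement (2), the same use of $\Z_2$-triviality to build the $\{0,1\}$-coloring $\xi_0$ from a bounding subsurface for statement (1), and the same non-positivity of $\chi(R)$ to deduce (3). Your justification of (3) via ``$\xi_0(R)$ is the minimal nonnegative integer of the prescribed parity'' is in fact a slightly more explicit spelling-out of the step the paper states tersely.
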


Although we do not need it to prove the main theorems, it is interesting to get also a lower bound of the quantity $\psi(s)$:
\begin{prop}
Let $D\subset S_{(g)}$ be a link diagram and $s$ a state such that $D_s$ is non empty. For $1\leq h \leq g$ let $\varphi_h(D_s)$ be the number of internal regions given by $D_s$ that are disks with $h$ holes. Then
\begin{enumerate}
\item{
$$
\sum_{h=2}^{g(D)} (h-1)\varphi_h(D_s) \leq g(D_s) -1 ;
$$}
\item{
$$
1-g(D) \leq 1-g(D_s) \leq \psi(s) .
$$}
\end{enumerate}
\begin{proof}
$(1)$ We proceed by induction on $g(D)$. If $D$ is contained in an annulus ($g(D)=1$) obviously the equality holds. Suppose that the inequality is true for every diagram $D'$ such that $g(D') < g(D)$. There are two cases:
\begin{enumerate}
\item{there is an internal region that is a disk with $g$ holes ($\varphi_g(D)=1$) and the other internal regions are annuli ($\varphi_h(D)=0$ for $1<h<g$);}
\item{there are no internal regions that are disks with $g$ holes ($\varphi_g(D)=0$).}
\end{enumerate}
In the first case the equality holds. Suppose we are in the second case. The diagram $D$ is obtained merging three disjoint diagrams, $D_1, D_2 , D_3 \subset S_{(g)}$, such that $g(D_1) + g(D_2) =g(D)$, $1\leq g(D_1), g(D_2)$, and $D_3$ is a (maybe empty) set of parallel curves encircling $D_1 \cup D_2$. If $D_3$ is non empty there is an $h$ such that $1<h<g$ and $\varphi_h(D) = \varphi_h(D_1)+\varphi_h(D_2) +1$ and $\varphi_{h'}(D) = \varphi_{h'}(D_1)+\varphi_{h'}(D_2)$ for $1<h'<g$, $h'\neq h$. The number $h$ is the number of holes of the region bounded by the most internal component of $D_3$. If $D_3$ is emty $\varphi_h(D) = \varphi_h(D_1)+\varphi_h(D_2)$ for all $h$. We conclude using the inductive hypothesis on $D_1$ and $D_2$.

$(2)$ Clearly $g(D_s)\leq g(D) \leq g$, furthermore the Euler characteristic of the regions of the shadow $X_s$ is non positive. Therefore we have the inequalities on the left and right of the statement. For the the point $(1)$ and the third statement of Lemma~\ref{lem:psi} we get
$$
1-g(D_s) \leq \sum_{h=2}^{g(D_s)} (1-h) \varphi_h(D_s) = \sum_{h=1}^{g(D_s)} (1-h) \varphi_h(D_s) \leq  \sum_R \chi(R)\xi_0(R) = \psi(s)  .
$$
\end{proof}
\end{prop}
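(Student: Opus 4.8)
The plan is to establish part (1) as a combinatorial fact about the system $D_s$ of disjoint essential simple closed curves in $S_{(g)}$, and then to read off part (2) from (1) together with Lemma~\ref{lem:psi}(3).

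For part (1) I would induct on $g(D_s)$. The base case $g(D_s)=1$ is immediate: the left-hand sum is empty and the right-hand side is $0$. For the inductive step I would examine the outermost nesting of the curves and distinguish two situations. If some internal region is already a disk with all $g(D_s)$ holes, then every remaining internal region can only be an annulus, so $\sum_{h\ge 2}(h-1)\varphi_h(D_s)=(g(D_s)-1)\cdot 1$ and equality holds. Otherwise there is an innermost enclosing curve $c$ whose interior is an internal region that is a disk with some number $h\ge 2$ of holes; deleting $c$ together with its nested parallel companions (which contribute only annuli, invisible to the sum $\sum_{h\ge 2}$) splits the interior into independent subdiagrams $D_1,\dots,D_h$ supported on punctured subdisks with $\sum_i g(D_i)=g(D_s)$ and each $g(D_i)\ge 1$. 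The region bounded by $c$ adds exactly $1$ to $\varphi_h$, so the inductive hypothesis yields
$$
\sum_{h'\ge 2}(h'-1)\varphi_{h'}(D_s)\;\le\;\sum_{i=1}^{h}(g(D_i)-1)+(h-1)\;=\;g(D_s)-1 ,
$$
the last equality being the whole point of the weight $h-1$: it compensates precisely for the $h$ units lost in $\sum_i(g(D_i)-1)=g(D_s)-h$.

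For part (2) the left inequality is just $g(D_s)\le g(D)$, since resolving the crossings of $D$ and discarding the homotopically trivial circles keeps the remaining curves inside any punctured disk that already contains $D$. For the right inequality I would use Lemma~\ref{lem:psi}(3), namely $\psi(s)=\sum_R\chi(R)\,\xi_0(R)$ with $\xi_0$ the unique $\{0,1\}$-coloring. Because $D_s$ has no homotopically trivial components, no internal region can be a plain disk, so every internal region is a disk with $h\ge 1$ holes and has $\chi=1-h\le 0$; hence part (1) rewrites as $1-g(D_s)\le\sum_{h\ge 1}(1-h)\varphi_h(D_s)=\sum_{R\ \mathrm{internal}}\chi(R)$. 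Finally, $\xi_0$ colors $1$ exactly the internal regions forming the surface $S_s$ bounded by $D_s$ together with the attached annuli (which contribute $\chi=0$), so $\sum_R\chi(R)\,\xi_0(R)$ is a sum of non-positive Euler characteristics over a subfamily of all the internal regions; summing non-positive quantities over a smaller family can only increase the total, giving $\sum_{R\ \mathrm{internal}}\chi(R)\le\sum_R\chi(R)\,\xi_0(R)=\psi(s)$, which closes the chain.

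The step I expect to be the main obstacle is the inductive decomposition in part (1): proving that the two cases are exhaustive and, in the splitting case, making rigorous that an innermost enclosing curve cuts the configuration into independent subdiagrams whose genera sum to $g(D_s)$, with the nested parallel companions contributing only annuli. The edge cases --- $h$ exactly $2$ versus larger, and the verification that each enclosed hole really supports a nonempty subdiagram (so that $g(D_i)\ge 1$ and nothing is undercounted) --- are exactly where the bookkeeping must be handled with care.
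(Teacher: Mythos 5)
Your part (2) is fine and is essentially the paper's own argument: the left inequality from $g(D_s)\le g(D)$, and the right one by rewriting part (1) as $1-g(D_s)\le\sum_{R\,\mathrm{internal}}\chi(R)$ and then noting that passing to $\sum_R\chi(R)\xi_0(R)=\psi(s)$ (Lemma~\ref{lem:psi}) only discards non-positive terms. The gap is in part (1). Your two cases are not exhaustive, and in the second case the key claim $\sum_i g(D_i)=g(D_s)$ is false in general, because nothing accounts for the curves and internal regions lying \emph{outside} the chosen curve $c$. Concretely, take $D_s\subset S_{(4)}$ to consist of six curves: $a_1,a_2$ boundary-parallel to holes $1,2$, a curve $c$ encircling $a_1\cup a_2$, and symmetrically $b_1,b_2$ around holes $3,4$ with $c'$ encircling $b_1\cup b_2$. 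Then $g(D_s)=4$, no internal region is a disk with $4$ holes, and no curve encloses all four holes, so an ``innermost enclosing curve'' in the sense you need does not exist; if instead you take $c$ to be an innermost curve whose inner region has $h\ge2$ holes, the subdiagrams inside it give $\sum_i g(D_i)=2\ne4$, and your displayed chain would bound $\sum_{h'\ge2}(h'-1)\varphi_{h'}(D_s)$ by $0+0+(2-1)=1$, whereas the true value is $2$ (there are two internal pairs of pants, one inside $c$ and one inside $c'$). You flag the exhaustiveness of the cases as the expected obstacle, but it is not a bookkeeping refinement: the missing configuration is exactly the one where $D_s$ splits into two or more side-by-side groups, and there your inductive step produces a false inequality.

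The paper closes this case by decomposing from the outside in rather than from the inside out: when no internal region is a disk with $g$ holes, it writes the curve system as $D_1\sqcup D_2\sqcup D_3$ with $g(D_1)+g(D_2)=g(D_s)$, $g(D_1),g(D_2)\ge1$, and $D_3$ a possibly empty family of parallel curves encircling $D_1\cup D_2$; the innermost curve of $D_3$ contributes one extra region to some $\varphi_h$, all other internal regions belong to $D_1$ or $D_2$, and the inductive hypothesis is applied to \emph{both} halves. In the example above this reads $\varphi_2(D_s)=\varphi_2(D_1)+\varphi_2(D_2)\le(g(D_1)-1)+(g(D_2)-1)=2$. To repair your scheme you would need to add this split case (for instance by inducting also on the number of outermost components, or by collapsing the interior of $c$ to a single hole and applying the hypothesis to the resulting lower-genus exterior system as well); as written, the induction does not go through.
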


\begin{defn}
Enumerate the boundary components of $S_{(g)}$. Let $D\subset S_{(g)}$ be a link diagram and $s$ a state of $D$. We denote with $p_i(s)$ the number of components of the splitting of $D$ with $s$ that are parallel to the $i^{\rm th}$ boundary component of $S_{(g)}$ ($i=1,\ldots, g+1$), and with $p(s)$ the total number of homotopically non trivial components of the splitting of $D$ with $s$. 
\end{defn}
For $g\geq 3$ it is not true that $\sum_{i=1}^{g+1} p_i(s) =p(s)$, but it is true if $g=2$. The links represented by a diagram $D\subset S_{(2)}$ are $\Z_2$-homologically trivial if and only if $p_i(s)+p_j(s)\in 2\Z$ for any state $s$ and any $i,j$. If this holds, since $p(s)=p_1(s)+p_2(s)+p_3(s)$, for every $i$ we have that $p_i(s)\in 2\Z$ if and only if $p(s)\in 2\Z$. These simple properties furnish some more information about the Kauffman bracket of links in $\#_2(S^1\times S^2)= (S^1\times S^2)\# (S^1\times S^2)$ (we do not need it to prove the main theorem):
\begin{prop}\label{prop:state_sum2}
Fix a proper embedding of $S_{(2)}$ in a handlebody of the H-decomposition of $\#_2(S^1\times S^2)$. Let $D\subset S_{(2)}$ be a link diagram of a $\Z_2$-homologically trivial link in $\#_2(S^1\times S^2)$, and let $s$ be a state of $D$. Then
$$
\psi(s) = \begin{cases}
0 & \text{ if } p(s)\in 2\Z  \\
-1 & \text{ if } p(s) \in 2\Z+1
\end{cases}.
$$
\begin{proof}
If $p_i(s)=0$ for some $i=1,2,3$ ($p(s) \in 2\Z$) then $D_s$ is contained in the union of two disjoint annuli, $A_1$ and $A_2$, that encircle the boundary components different from the $i^{\rm th}$ one. Let $D_{s,j}$ be the diagram in the annulus $A_j$. Then by Remark~\ref{rem:tensor} $\langle D_s\rangle =\langle D_{s,1}\rangle \cdot \langle D_{s,2}\rangle $ where $D_{s,1}$ and $D_{s,2}$. Hence by \cite[Lemma 3.3]{Carrega_Tait1} $\langle D_s \rangle $ is a positive integer number and so $\psi(s) =0$.

Suppose $p_i(s) \neq 0$ for all $i=1,2,3$. The shadow $X_s$ has a region that is a 2-disk with two holes and the other ones are annuli. By Lemma~\ref{lem:psi} we can compute $\psi(s)$ using the admissible coloring $\xi_o$, $\psi(s) = \sum_R \chi(R) \cdot \xi_0(R)$. Hence $\psi(s)= -\xi_0(R')$, where $R'$ is the region that is a disk with two holes. For any admissible coloring $\xi$ of $X_s$ we have that the color $\xi(R')\in 2\Z+1$ if and only if $p(s)\in 2\Z+1$. We conclude since $\xi_0$ assigns just color $0$ and $1$.
\end{proof}
\end{prop}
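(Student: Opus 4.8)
The plan is to read off $\psi(s)$ from the region structure of the shadow $X_s$ via Lemma~\ref{lem:psi}(3), after first understanding $D_s$ geometrically. Since $S_{(2)}$ is a pair of pants, every homotopically nontrivial simple closed curve in it is parallel to one of the three boundary components; thus $D_s$ is a disjoint union of three nested families of boundary-parallel curves, $p_i(s)$ of them parallel to $\partial_i$. All regions of $S_{(2)}\setminus D_s$ are then annuli, with a single possible exception: the innermost curves of the three families cobound a central region $R'$, which is a disk with two holes ($\chi(R')=-1$) exactly when all three $p_i(s)$ are positive; if some $p_i(s)=0$ this central region instead touches $\partial_i$ and is external. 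I would also invoke the parity constraint already recorded in the text: $\Z_2$-homological triviality forces $p_1(s),p_2(s),p_3(s)$ to share a common parity, so $p(s)=p_1(s)+p_2(s)+p_3(s)$ is even precisely when every $p_i(s)$ is even.

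Next I would apply Lemma~\ref{lem:psi}(3) to write $\psi(s)=\sum_R\chi(R)\,\xi_0(R)$, the sum ranging over internal regions, since the external ones contribute nothing. Every annular region has $\chi=0$, so the only possible contribution comes from $R'$. When some $p_i(s)=0$ there is no internal non-annular region and hence $\psi(s)=0$; this matches the claim because homological triviality then makes $p(s)$ even. (Equivalently, as in the author's argument, $D_s$ then lies in two disjoint annuli and $\langle D_s\rangle$ factors by Remark~\ref{rem:tensor} into annular brackets, each a positive integer by \cite[Lemma 3.3]{Carrega_Tait1}, so again $\psi(s)=0$; the empty case $D_s=\varnothing$ gives $p(s)=0$ and $\psi(s)=0$ trivially.) When all $p_i(s)>0$, only $R'$ survives and $\psi(s)=-\xi_0(R')$.

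It remains to pin down $\xi_0(R')\in\{0,1\}$, which is the heart of the argument. Here I would use the chessboard parity of the coloring. At each curve $\gamma$ of $D_s$, the interior edge of $X_s$ along $\gamma$ is adjacent to the two sides of $\gamma$ and to the annulus attached along $\gamma$, which is colored $1$; admissibility of this triple forces $a+b$ to be odd, where $a,b$ are the colors of the two sides, so the color parity flips each time a curve is crossed. Travelling from an external region of $S_{(2)}$ (colored $0$ under $\xi_0$) inward to $R'$ across the $p_1(s)$ curves parallel to $\partial_1$, the parity flips $p_1(s)$ times, giving $\xi_0(R')\equiv p_1(s)\equiv p(s)\pmod 2$. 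Since $\xi_0$ takes only the values $0$ and $1$, we get $\xi_0(R')=1$ when $p(s)$ is odd and $0$ when $p(s)$ is even, whence $\psi(s)=-\xi_0(R')$ is $-1$ or $0$ accordingly. Combining the two cases gives the stated dichotomy. The main obstacle is exactly this parity bookkeeping---identifying the unique internal non-annular region and determining the parity of its $\xi_0$-color from the admissibility constraints---rather than any estimate on degrees.
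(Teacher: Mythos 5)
Your proof is correct and follows essentially the same route as the paper: both reduce to Lemma~\ref{lem:psi}(3) and determine $\psi(s)$ from the parity of the $\xi_0$-color of the unique internal pair-of-pants region. The only differences are cosmetic: in the case where some $p_i(s)=0$ you read off $\psi(s)=0$ directly from the shadow formula (all internal regions being annuli), whereas the paper factors $\langle D_s\rangle$ through two disjoint annuli via Remark~\ref{rem:tensor} and a lemma from \cite{Carrega_Tait1}; and you make explicit the chessboard-parity argument showing $\xi_0(R')\equiv p(s) \pmod 2$, which the paper only asserts.
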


Let $D\subset S_{(g)}$ be a $n$-crossing, connected diagram with $g(D)=g$ that represents a $\Z_2$-homologically trivial link in $\#_g(S^1\times S^2)$ by a proper embedding of $S_{(g)}$ in a handlebody of the H-decomposition of $\#_g(S^1\times S^2)$. Then for a state $s$ we get
\beq
\ord_\infty \langle D | s\rangle & = & \sum s(i) + 2( sD + \psi(s) ) \\  
\ord_0 \langle D | s\rangle & = & \sum s(i) - 2( sD + \psi(s) ) .
\eeq
Therefore
$$
\ord_\infty \langle D | s_+ \rangle - \ord_0 \langle D | s_- \rangle = 2(n + s_+D +s_-D +\psi(s_+) + \psi(s_-)) .
$$

The following is the analogue result of \cite[Lemma 4.1]{Carrega_Tait1} and is the part that needs most work to get the final result:
\begin{lem}\label{lem:ineq1_g}
Let $D\subset S_{(g)}$ be a $n$-crossing, connected diagram of a $\Z_2$-homologically trivial link in $\#_g(S^1\times S^2)$. Then
$$
B(\langle D \rangle) \leq \ord_\infty \langle D|s_+ \rangle - \ord_0 \langle D|s_- \rangle = 2(n + s_+D + s_-D + \psi(s_+) +\psi(s_-)) .
$$
Moreover if $D$ is adequate the equality holds:
$$
B(\langle D \rangle) = \ord_\infty \langle D | s_+ \rangle -\ord_0 \langle D|s_- \rangle .
$$
\begin{proof}
For every state $s$ we denote with $M(s)$ and $m(s)$ respectively the order in $\infty$ and in $0$ of $\langle D | s\rangle$. Hence
$$
B(\langle D \rangle) \leq \max_s M(s) - \min_s m(s) ,
$$
and the equality holds if there is a unique maximal $s$ for $M$ and a unique minimal $s$ for $m$. We have:
\beq
M(s) & = & \sum_i s(i) +2sD +2\psi(s) \\
m(s) & = & \sum_i s(i) -2sD -2\psi(s) .
\eeq
Let $s$ and $s'$ be two states differing only in a crossing $j$ where $s(j)=+1$ and $s'(j)=-1$. We have
\beq
M(s) - M(s') & = & 2(1+sD-sD' +\psi(s) - \psi(s')) \\
m(s) - m(s') & = & 2(1-sD+sD' -\psi(s) + \psi(s')) .
\eeq
After the splitting of every crossing different from $j$ we have some cases up to mirror image. Four of them are divided in three types and are shown in Fig.~\ref{figure:cases_lem} (we pictured just an homotopically non trivial annulus where the diagrams hold): the crossing $j$ lies on a component contained in an annulus, $s'D=sD \pm 1$. In the remaining cases $j$ lies on a component that is not contained in an annulus, $s'D = sD$, they are shown in Fig.~\ref{figure:cases_lem2} (we pictured just a disk with two holes that is not homotopically equivalent to an annulus where the diagrams hold). Now we better examine the four types of case in order to get information about $\psi(s)-\psi(s')$, and get $M(s)\geq M(s')$ and $m(s)\geq m(s')$.

In the first two types of case (Fig.~\ref{figure:cases_lem}-(left) and Fig.~\ref{figure:cases_lem}-(center)) $D_{s'}=D_s$, hence $\psi(s')=\psi(s)$. Therefore $M(s)-M(s')= 2\pm 2 \geq 0$, and $m(s)-m(s') = 2\pm 2\geq 0$. 

In the third type of case (Fig.~\ref{figure:cases_lem}-(right)) $D_{s'}$ and $D_s$ differ by the removal or the addition of an annulus region that produces the fusion or the division of previous regions. Suppose we are in the third case and the splitting with $s'$ is obtained fusing two components of the splitting with $s$ (the other case is analogous). Then $D_{s'}$ is obtained from $D_s$ removing two parallel components. The regions of $D_s$ that do not touch the selected parallel components remain unchanged in $D_{s'}$. The selected components of $D_s$ bound an annulus region $A$. The annulus $A$ touches two different regions, $R_1$ and $R_2$, that are disks respectively with $h_1>0$ and $h_2>0$ holes. In $D_{s'}$ the regions $A$, $R_1$ and $R_2$ are replaced with a region $R$ that is a disk with $h_1+h_2-1$ holes. Let $\xi_0$ and $\xi'_0$ be the admissible colorings of $X_s$ and $X_{s'}$ that extend the boundary coloring and assign only color $0$ and $1$ (see Lemma~\ref{lem:psi}). We have two cases:
\begin{enumerate}
\item{$\xi_0(R_1)=\xi_0 (R_2) =\xi'_0(R) =0$ and $\xi(A)=1$;}
\item{$\xi_0(R_1)=\xi_0 (R_2) =\xi'_0(R) =1$ and $\xi(A)=0$.}
\end{enumerate}
Since $\chi(A)=0$, $\chi(R) = \chi(R_1)+\chi(R_2)$, $\psi(s)= \sum_{\bar R} \chi(\bar R) \xi_0(\bar R)$ and $\psi(s')= \sum_{\bar R} \chi(\bar R) \xi'_0(\bar R)$, we get that $\psi(s)=\psi(s')$. Therefore $M(s)-M(s')= 2\pm 2\geq 0$ and $m(s)- m(s') =2\pm 2 \geq 0$.

Suppose we are in the fourth type of case (Fig.~\ref{figure:cases_lem2}) and the splitting with $s'$ is obtained dividing in two a component of the splitting with $s$ (the other case is analogous). In the same way we get $D_{s'}$ from $D_s$. We have that the selected component of $D_s$ bounds on a side a disk with $h_1+h_2$ holes, $R_1$, and on the other one a disk with $k\geq 2$ holes, $R_2$, while those two components of $D_{s'}$ together bound an internal region with $k+1$ holes, $R'_2$, and two different punctured disks with $h_1>0$ and $h_2$ holes, respectively $R'_{1,1}$ and $R'_{1,2}$. Again, let $\xi_0$ and $\xi'_0$ be the admissible colorings of $X_s$ and $X_{s'}$ that extend the boundary coloring and assign only the colors $0$ and $1$ (see Lemma~\ref{lem:psi}). We have two cases:
\begin{enumerate}
\item{$\xi_0(R_1)=\xi'_0(R'_{1,1})= \xi'_0(R'_{1,2}) =0$ and $\xi_0(R_2)=\xi'_0(R'_2)=1$;}
\item{$\xi_0(R_1)=\xi'_0(R'_{1,1})= \xi'_0(R'_{1,2}) =1$ and $\xi_0(R_2)=\xi'_0(R'_2)=0$.}
\end{enumerate}
Since $\psi(s)= \sum_{\bar R} \chi(\bar R)\xi_0(\bar R)$ and $\psi(s') = \sum_{\bar R} \chi(\bar R) \xi'_0(\bar R)$ we get $\psi(s')=\psi(s) -1$ in the first case, and $\psi(s')=\psi(s)$ in the second one. Therefore in each case $M(s)-M(s')\geq 0$ and $m(s)- m(s') \geq 0$.

Given a state $s$ we can connect it to $s_+$ finding a sequence of states $s_+=s_0 , s_1 \ldots , s_k =s$ such that $s_r$ differs from $s_{r+1}$ only in a crossing and $\sum_i s_r(i) = \sum_i s_{r+1}(i) +2$. Analogously for $s_-$. Hence $M(s) \leq M(s_+)$ and $m(s) \geq m(s_-)$. Thus we have the first statement. 

If the diagram is plus-adequate (resp. minus-adequate) the fourth type of case (Fig.~\ref{figure:cases_lem2}) is not allowed and it holds $M(s_+) - M(s)=4$ ($m(s) - m(s_-) =4$) for each $s$ such that $\sum_s s(i)= n-2$ ($= 2-n$). Namely we have a strict inequality already from the beginning of the sequence $s_0,\ldots, s_k$. This proves the second statement.
\end{proof}
\end{lem}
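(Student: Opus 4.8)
The plan is to reduce the whole statement to the behaviour of the two quantities $M(s):=\ord_\infty\langle D|s\rangle$ and $m(s):=\ord_0\langle D|s\rangle$ under a single crossing flip, and then to feed this into the state expansion $\langle D\rangle=\sum_s\langle D|s\rangle$. Using the formulas recorded just before the statement, namely $M(s)=\sum_i s(i)+2sD+2\psi(s)$ and $m(s)=\sum_i s(i)-2sD-2\psi(s)$, the breadth of a sum of monomials is bounded by the spread of its extreme exponents, so that $B(\langle D\rangle)\leq \max_s M(s)-\min_s m(s)$. The first inequality of the lemma then follows once I show $\max_s M(s)=M(s_+)$ and $\min_s m(s)=m(s_-)$, after which the displayed equality is pure arithmetic: since $\sum_i s_+(i)=n$ and $\sum_i s_-(i)=-n$, one computes $M(s_+)-m(s_-)=2(n+s_+D+s_-D+\psi(s_+)+\psi(s_-))$.

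First I would compare two states $s,s'$ agreeing everywhere except at a crossing $j$, with $s(j)=+1$ and $s'(j)=-1$, and record the identities $M(s)-M(s')=2(1+sD-s'D+\psi(s)-\psi(s'))$ and $m(s)-m(s')=2(1-sD+s'D-\psi(s)+\psi(s'))$. The goal is to prove both differences are $\geq 0$; granting this, an arbitrary state can be joined to $s_+$ (resp.\ $s_-$) by a chain of single flips increasing (resp.\ decreasing) $\sum_i s(i)$ by $2$ at each step, which forces $M(s)\leq M(s_+)$ and $m(s)\geq m(s_-)$ monotonically along the chain.

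The heart of the argument, and the step I expect to be the main obstacle, is the case analysis describing how $D_s$ passes to $D_{s'}$ near $j$ once all other crossings are resolved, together with the evaluation of $\psi(s)-\psi(s')$. Here I would rely decisively on Lemma~\ref{lem:psi}: $\Z_2$-homological triviality provides a \emph{unique} admissible $0/1$ coloring $\xi_0$ of $X_s$, and $\psi(s)=\sum_R\chi(R)\xi_0(R)$. I would organize matters into four types. In the two types where $j$ sits on a component contained in an annulus with $D_{s'}=D_s$, one has $\psi(s)=\psi(s')$ and $sD-s'D=\pm 1$, so both differences equal $2\pm 2\geq 0$. In the third type an annulus region is created or destroyed, merging two hole-disks $R_1,R_2$ into a single region $R$; since $\chi(A)=0$ and $\chi(R)=\chi(R_1)+\chi(R_2)$ and the parity of region colors is dictated by the chessboard structure, $\xi_0$ transports across the move and $\psi$ is unchanged, again yielding $2\pm 2\geq 0$. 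The delicate fourth type, where $j$ lies on a non-annulus component so that $s'D=sD$, requires tracking how a hole-disk is divided into the pieces $R'_{1,1},R'_{1,2},R'_2$; reading $\xi_0$ against $\xi'_0$ shows $\psi(s')\in\{\psi(s),\psi(s)-1\}$, whence both differences are $\geq 0$ once more.

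Finally, for the adequacy refinement I would note that plus-adequacy forbids the fourth type altogether, so each state $s$ with $\sum_i s(i)=n-2$ satisfies $M(s_+)-M(s)=4$, a strict drop already at the first flip of every chain; this makes $s_+$ the unique maximizer of $M$, so the top term of $\langle D\rangle$ cannot cancel, and symmetrically minus-adequacy pins $s_-$ as the unique minimizer of $m$. Uniqueness of the two extremizers upgrades the breadth bound to the equality $B(\langle D\rangle)=M(s_+)-m(s_-)$, which is the second assertion.
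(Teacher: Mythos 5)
Your proposal is correct and follows essentially the same route as the paper's proof: the same reduction to $M(s)$ and $m(s)$, the same four-type case analysis of a single crossing flip controlled by the $0/1$ coloring of Lemma~\ref{lem:psi}, the same chaining to $s_\pm$, and the same use of adequacy to exclude the fourth type and force uniqueness of the extremizers. No substantive differences to report.
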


\begin{figure}[htbp]
\begin{center}
\parbox[c]{3cm}{ 
\includegraphics[scale=0.4]{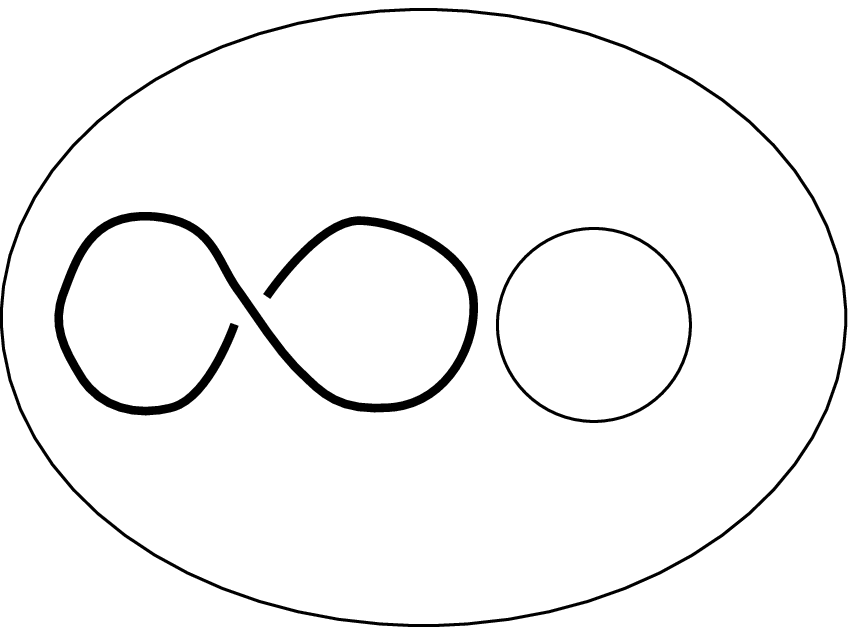} \\
\includegraphics[scale=0.4]{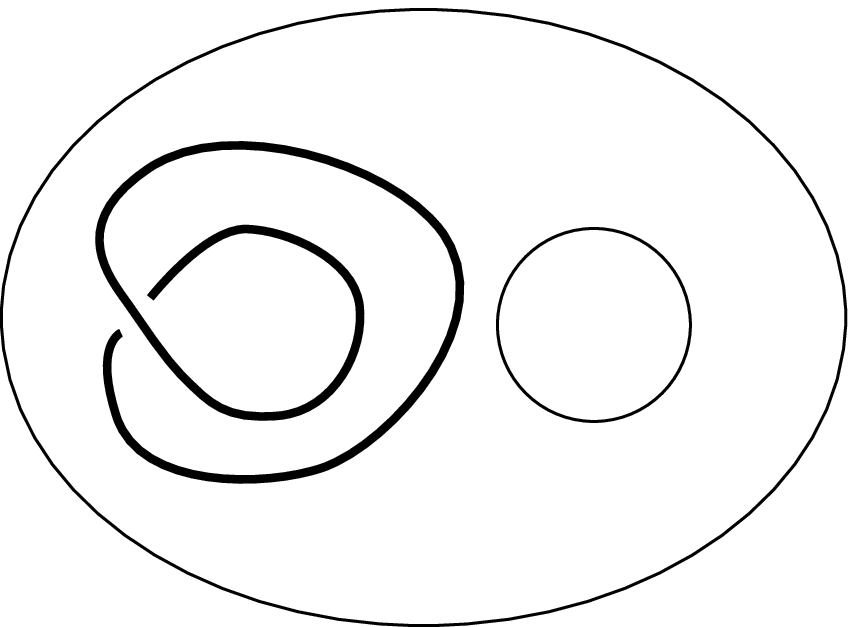} }
\hspace{0.35cm}
\parbox[c]{3cm}{  \includegraphics[scale=0.4]{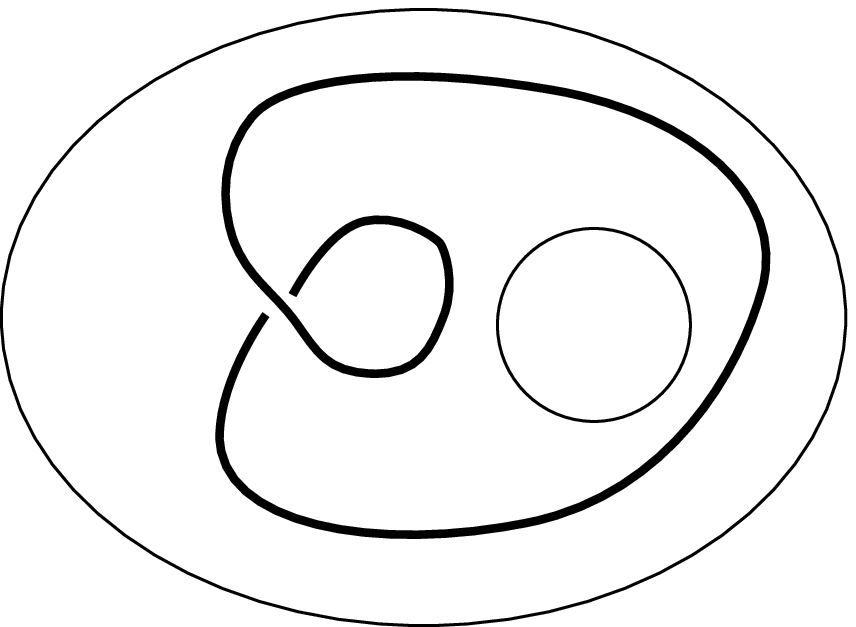} }
\hspace{0.35cm}
\parbox[c]{3cm}{ \includegraphics[scale=0.4]{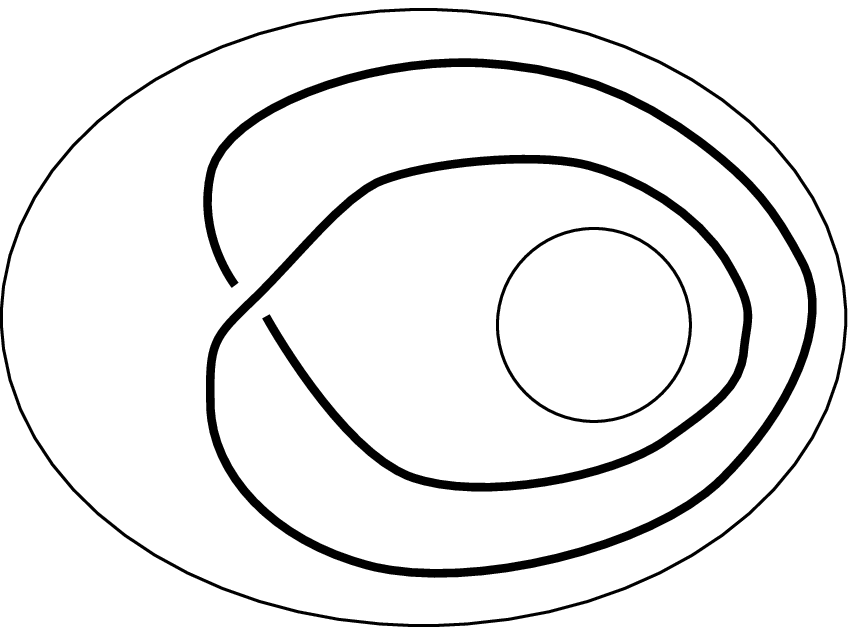} }
\end{center}
\caption{The first three types of cases of the proof of Lemma~\ref{lem:ineq1_g}.}
\label{figure:cases_lem}
\end{figure}

\begin{figure}[htbp]
\begin{center}
\includegraphics[scale=0.4]{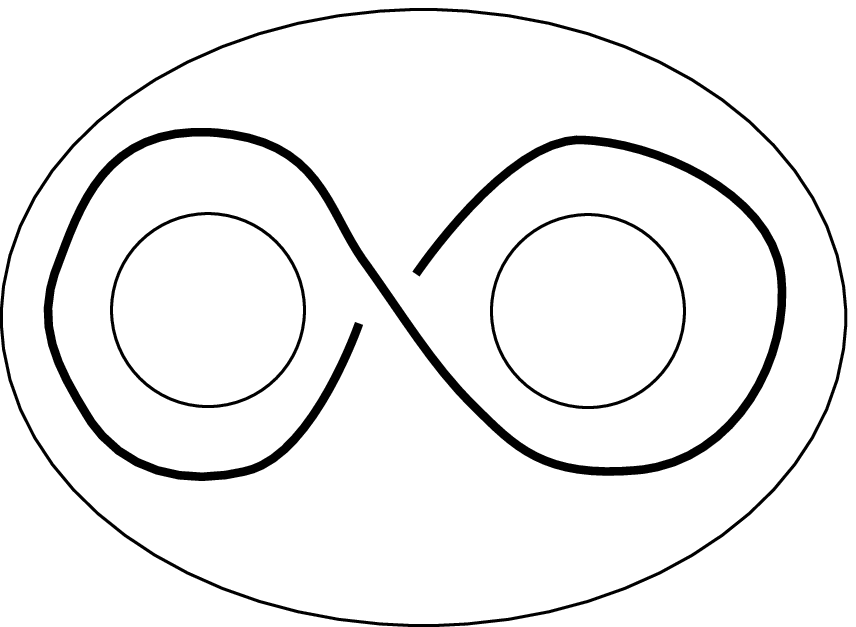}
\hspace{0.5cm}
\includegraphics[scale=0.4]{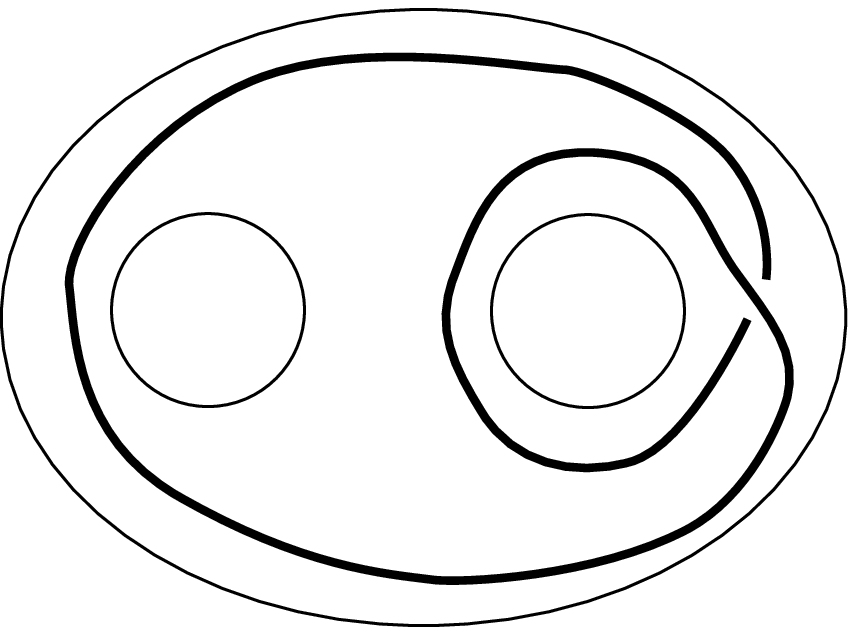}
\end{center}
\caption{The fourth type of case of the proof of Lemma~\ref{lem:ineq1_g}.}
\label{figure:cases_lem2}
\end{figure}

The following comprehends the analogue result of \cite[Lemma 4.2]{Carrega_Tait1}:
\begin{lem}\label{lem:ineq2_g}
Let $D\subset S_{(g)}$ be a $n$-crossing, connected diagram with $g(D)=g$ of a $\Z_2$-homologically trivial link in $\#_g(S^1\times S^2)$. Then
$$
s_+ D + s_- D \leq n + 1-g  .
$$
\begin{proof}
We proceed by induction on $n$. We have $g \leq n $, and, up to homeomorphism of $S_{(g)}$, there are exactly $2^g$ diagrams with these characteristics (connected, $g(D)=g$, and representing a $\Z_2$-homologically trivial link) and with $g$ crossings: the ones described in Fig.~\ref{figure:n=g}. Now we prove the statement for the diagrams with $g$ crossings (the base for the induction on $n$) by induction on $g$. 

For $g=0$ we have just a homotopically trivial circle and its Kauffman bracket is $-A^2-A^{-2}$, hence the statement holds. Suppose that the statement is true for such diagrams in $S_{(g-1)}$ with $g-1$ crossings. We split the crossing on the right of Fig.~\ref{figure:n=g}. The result is a diagram obtained taking a one of those diagrams $D'$ in $S_{(g-1)}$ and either adding an hole in the external region surrounding the hole on the right (according to the figure) of $S_{(g-1)}$, or adding a hole in the internal region and surrounding it with a circle. We did not add any homotopically trivial components, hence
$$
s_+D +s_-D =  s_+D' +s_-D' \leq 1 .
$$

Now suppose that the statement is true for all diagrams in $S_{(g)}$ with less than $n>g$ crossings. We claim, and prove later, that there is a crossing $j$ of $D$ and a splitting $D'$ of $j$ with the same characteristics of $D$ (connected and $g(D')=g$). We can suppose that the splitting of $j$ is done in the positive way (the negative way is analogous), hence $s_+D' = s_+D$ and $s_-D -1 \leq s_-D' \leq s_-D + 1$. Therefore
\beq
s_+D + s_-D & \leq  & s_+D' + s_- D' +1 \\
& \leq & (n-1)+1-g +1 \\
& = & n+1-g .
\eeq

Finally we prove the claim. Since $D$ is connected every crossing has a splitting that is still connected. If there is a crossing not adjacent to two external regions, every splitting $D'$ of the crossing satisfies $g(D')=g$. Since $n>g$, we get the previous case if $D$ has no more than $g$ crossings adjacent to two external regions. If there are at least $g+1$ crossings adjacent to two external regions, we take as $j$ one of them, its connected splitting $D'$  satisfies $g(D')=g$. 
\end{proof}
\end{lem}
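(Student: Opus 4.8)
The plan is to induct on the number of crossings $n$, tracking how the counts $s_+D$ and $s_-D$ of homotopically trivial state loops change when a single crossing is resolved. Since $g(D)=g$ and the link is $\Z_2$-homologically trivial, one checks that $g\le n$, so the base of the induction is $n=g$. For the base I would first observe that, up to homeomorphism of $S_{(g)}$, the connected diagrams with $g(D)=g$ representing a $\Z_2$-homologically trivial link and having exactly $g$ crossings are rigid: there are only finitely many (the standard ones of Fig.~\ref{figure:n=g}), and I would dispose of them by a secondary induction on $g$. The degenerate case $g=0$ is a single homotopically trivial circle, and the inductive step passes from $S_{(g)}$ to $S_{(g-1)}$ by deleting the rightmost hole together with its crossing: this operation creates no new homotopically trivial component, so $s_+D$ and $s_-D$ are unchanged and the bound $s_+D+s_-D\le 1=g+1-g$ is inherited.

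For the inductive step $n>g$ the idea is to remove one crossing well. Concretely I would look for a crossing $j$ of $D$ together with one of its two resolutions $D'$ — say the positive one — such that $D'$ is again connected and still satisfies $g(D')=g$. Granting such a $j$, the all-positive state of $D$ already resolves $j$ positively, so $s_+D'=s_+D$, while changing a single resolution alters the number of homotopically trivial loops by at most one, whence $s_-D\le s_-D'+1$. Since $D'$ has $n-1$ crossings, the inductive hypothesis gives $s_+D'+s_-D'\le (n-1)+1-g$, and therefore
$$
s_+D+s_-D \;\le\; s_+D'+s_-D'+1 \;\le\; n+1-g,
$$
which is exactly the desired inequality.

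The heart of the argument — and the step I expect to be the main obstacle — is the existence of such a good crossing $j$, i.e.\ a resolution that simultaneously preserves connectedness and the homotopic genus. Connectedness is easy: because $D$ is connected, every crossing admits at least one resolution whose underlying $4$-valent graph stays connected. The delicate point is keeping $g(D')=g$, since resolving a crossing adjacent to two external regions can cut off a hole and drop the genus. I would argue by the dichotomy already isolated in the notion of \emph{simple}: if some crossing is \emph{not} adjacent to two external regions then at least one incident region is internal, and any resolution leaves the essential hole structure intact, giving $g(D')=g$; as $n>g$, this settles the case in which $D$ has at most $g$ crossings adjacent to two external regions. In the remaining case $D$ has at least $g+1$ crossings adjacent to two external regions, and then I would select $j$ among them so that its connected resolution $D'$ still uses all $g$ holes, which is possible precisely because the abundance of such crossings prevents a single resolution from collapsing the hole structure. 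Making this last genus-preservation claim fully rigorous — pinning down exactly when resolving a two-external crossing merges or frees a hole — is the part that will need the most care.
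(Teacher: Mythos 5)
Your proposal follows essentially the same route as the paper's own proof: induction on $n$ with base case $n=g$ handled by a secondary induction on $g$ via the standard diagrams of Fig.~\ref{figure:n=g}, and the inductive step carried out by locating a crossing whose resolution preserves both connectedness and $g(D')=g$, using the same dichotomy on the number of crossings adjacent to two external regions. The genus-preservation step you flag as delicate is treated with the same brevity in the paper, so there is no substantive difference between the two arguments.
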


\begin{figure}[htbp]
\begin{center}
\includegraphics[width=7cm]{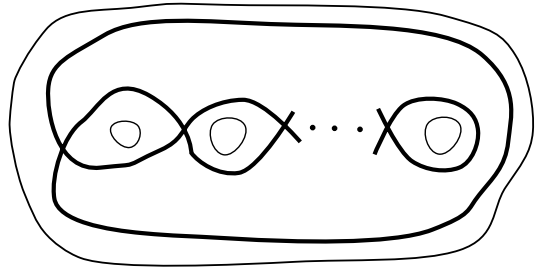}
\end{center}
\caption{Up to homeomorphism of $S_{(g)}$ all the link diagrams $D$ of $\Z_2$-homologically trivial links in $\#_g(S^1\times S^2)$ with $g$ crossings and $g(D)=g$ are obtained choosing the over/underpass for every crossing of the graph in figure.}
\label{figure:n=g}
\end{figure}

The following comprehends the analogue result of \cite[Lemma 4.3]{Carrega_Tait1}:
\begin{lem}\label{lem:alter_eq_g}
Let $D\subset S_{(g)}$ be a $n$-crossing alternating connected link diagram with $g(D)= g$ that represents a $\Z_2$-homologically trivial link in $\#_g(S^1\times S^2)$. Then 
$$
\ord_\infty \langle D|s_+ \rangle - \ord_0 \langle D|s_- \rangle = 4n +4 -4g .
$$
\begin{proof}
The number of edges of $D$ as a planar graph is $2n$, hence with a computation of Euler characteristic of $S_{(g)}$ we get that the sum of the Euler characteristics of the regions is $n+1-g$. All the internal regions are disks and there are $g+1$ external regions that are annuli. Thus there are $n+1-g$ internal regions. The diagram $D_{s_\pm}$ has a region that is a disk with $g$ holes, and the others are annuli. Thus
$$
\langle D|s_\pm \rangle = A^{\pm n} (-A^2 -A^{-2})^{s_\pm D + 1 - g} .
$$
The regions of $D_{s_\mp}$ are all disks. Thus
$$
\langle D|s_\mp \rangle = A^{\mp n} (-A^2 -A^{-2})^{s_\mp D}  .
$$
Since the link is alternating, the number of internal regions is equal to $s_+D+s_-D$. Therefore
\beq
\ord_\infty \langle D|s_+ \rangle - \ord_0 \langle D|s_- \rangle & = &  2(n + s_+ D + s_- D + 1 - g) \\
& = & 2(n+n+2-p(s_+ )- p(s_-) +1 -g) \\
& = & 4n + 4 -4g .
\eeq
\end{proof}
\end{lem}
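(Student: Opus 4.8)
The plan is to reduce everything to the identity
\[
\ord_\infty\langle D|s_+\rangle - \ord_0\langle D|s_-\rangle = 2\bigl(n + s_+D + s_-D + \psi(s_+) + \psi(s_-)\bigr),
\]
which was recorded just before Lemma~\ref{lem:ineq1_g} (using $\sum s_+(i)=n$, $\sum s_-(i)=-n$, and $\ord_\infty\langle D_s\rangle = 2\psi(s)$), and then to evaluate the two combinatorial quantities $s_+D + s_-D$ and $\psi(s_+)+\psi(s_-)$ separately. Thus the entire proof is a count of circles together with a computation of the numbers $\psi$.

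First I would count regions. Viewing $D$ as a $4$-valent graph with $n$ vertices and $2n$ edges embedded in $S_{(g)}$, and using $\chi(S_{(g)})=1-g$ together with $\chi(S_{(g)}) = V-E+\sum_R\chi(R)$ (a regular neighborhood of $D$ meets the region closures along circles, which contribute $0$), I get $\sum_R\chi(R)=n+1-g$. The geometric heart of the argument is to classify the regions: I claim that the $g+1$ external regions are annuli, one around each boundary component of $S_{(g)}$, and that every internal region is a disk. That each external region hugs a single boundary component and is an annulus is where the hypotheses $g(D)=g$ and connectivity are used; otherwise one could isotope $D$ into a punctured disk with fewer holes. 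Granting this, the annuli contribute $0$ to $\sum_R\chi(R)$ and the disks contribute $1$, so there are exactly $n+1-g$ internal regions.

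Next I would use the alternating hypothesis. Since the link is $\Z_2$-homologically trivial, $D$ is checkerboard colorable, and for an alternating diagram the two constant states $s_+$ and $s_-$ are precisely the two checkerboard resolutions: the circles of the $s_+$-splitting are the $D$-sides of the regions of one color and those of the $s_-$-splitting of the other color, each region contributing a single circle. Internal (disk) regions yield homotopically trivial circles and external (annular) regions yield homotopically non-trivial ones, so summing over both colors gives $s_+D + s_-D = (\text{number of internal regions}) = n+1-g$ and $p(s_+)+p(s_-)=g+1$.

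It remains to compute $\psi(s_+)+\psi(s_-)$, and here $\Z_2$-homological triviality enters decisively. For either constant state $s$ the diagram $D_s$ consists only of boundary-parallel circles, and $[D_s]=0\in H_1(S_{(g)};\Z_2)$ forces these circles to encircle either all of the $g+1$ boundary components or none of them, since a sum of a proper nonempty subset of the classes $[\partial_i]$ is never null (exactly the mechanism behind Proposition~\ref{prop:0Kauff_g}). Hence all $g+1$ non-trivial circles belong to a single splitting, say $D_{s_+}$, whose shadow has one region that is a disk with $g$ holes (Euler characteristic $1-g$) and otherwise annuli, while $D_{s_-}$ is empty. By Lemma~\ref{lem:psi} this gives $\langle D_{s_+}\rangle = (-A^2-A^{-2})^{1-g}$ and $\langle D_{s_-}\rangle = 1$, so $\psi(s_+)=1-g$ and $\psi(s_-)=0$. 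Substituting $s_+D+s_-D=n+1-g$ and $\psi(s_+)+\psi(s_-)=1-g$ into the displayed identity yields $2\bigl(n+(n+1-g)+(1-g)\bigr)=4n+4-4g$. I expect the region classification in the second paragraph to be the main obstacle: proving rigorously, from connectivity and $g(D)=g$, that the external regions are annuli and the internal regions are disks is the one genuinely geometric point, whereas everything else is bookkeeping with Euler characteristics and the homological constraint.
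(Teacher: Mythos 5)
Your proof is correct and follows essentially the same route as the paper's: the Euler-characteristic count giving $n+1-g$ internal disk regions and $g+1$ external annuli, the identification of the $s_\pm$-circles with the two checkerboard colour classes (so $s_+D+s_-D$ equals the number of internal regions), and the observation that $\Z_2$-homological triviality forces all $g+1$ boundary-parallel circles into a single splitting, contributing $\psi(s_+)+\psi(s_-)=1-g$, are exactly the steps the paper takes. If anything, you spell out more carefully the points the paper only asserts, namely why one of $D_{s_\pm}$ is the full system of boundary-parallel circles and the other is empty.
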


\begin{proof}[Proof of Theorem~\ref{theorem:Tait_conj_g}]
Every diagram $\bar D\subset S_{(g)}$ of $L$ for any embedding of $S_{(g)}$ must be connected and with $g(\bar D)=g$. By Proposition~\ref{prop:reduced_D_g}, Lemma~\ref{lem:ineq1_g}, Lemma~\ref{lem:ineq2_g} and Lemma~\ref{lem:alter_eq_g} we get
\beq
4n(D)+4 -4g & = & B(\langle D \rangle) \\
& = & B(\langle D' \rangle) \\
& \leq & 4n(D') +2 -2g +2(\psi(s_+,D') +\psi(s_-,D')) \\
& \leq & 4n(D') +2 -2g ,
\eeq
where $\psi(s_\pm,D')$ is the quantity $\psi(s_\pm)$ related to the diagram $D'$. Therefore $n(D)\leq n(D') + (g-1)/2$. If $g\leq 2$, $n(D)\leq n(D') +\frac 1 2$, but $n(D)$ and $n(D')$ are integers, hence $n(D)\leq n(D')$. 
\end{proof}

\begin{proof}[Proof of Theorem~\ref{theorem:Tait_conj_Jones_g}]
We have already proved the case $k=0$ in Lemma~\ref{lem:alter_eq_g}. Suppose $k>0$. We proceed by induction on $g$. We apply the second identity of Fig.~\ref{figure:sphere} (with $a=b=1$) near one such crossing. We get $\langle D \rangle= \langle D' \rangle /(-A^2-A^{-2})$, where $D'\subset S_{(g)}$ is a $n$-crossing, connected, alternating diagram with $g(D')=g-1$ and $0<h\leq k$ crossings adjacent twice to a region. We have that $\langle D' \rangle = \langle D''\rangle$, where $D''$ is a link diiaigram in $S_{(g-1)}$ with the same characteristics of $D'$. Moreover we can easily get a diagram $D'''\subset S_{(g-1)}$ representing the same link of $D''$ and is connected, alternating, with $g(D''')=g-1$ and has $n-h$ crossings. Therefore by the inductive hypothesis 
\beq
B(\langle D \rangle) & = & B(\langle D' \rangle) -4 \\
& = & B(\langle D''' \rangle) -4 \\
& = & 4(n-h) +4 - 4(g-1)-4 (k-h)-4 \\
& = & 4n +4 -4k .
\eeq
\end{proof}

\begin{cor}\label{cor:conj_Tait_Jones_g}
Let $L\subset \#_g(S^1\times S^2)$ be a non H-split $\Z_2$-homologically trivial link. 
\begin{enumerate}
\item{If $B(\langle L \rangle)$ is not a multiple of $4$, then $L$ is not alternating.}
\item{If $B(\langle L \rangle)$ is not a positive multiple of $4$, then $L$ is not alternating with a simple diagram.}
\item{If $L$ has homotopic genus $g$, $L$ does not intersect any non separating 2-sphere and $B(\langle L \rangle)< 4n+4-4g$, then either $L$ is not alternating, or $L$ has crossing number lower than $n$.}
\end{enumerate}
\begin{proof}
It follows from Theorem~\ref{theorem:Tait_conj_Jones_g}. To get the second statement we must note that every connected diagram in $S_{(g)}$ that is not contained in a disk with $g'<g$ holes has at least $g$ crossings.
\end{proof}
\end{cor}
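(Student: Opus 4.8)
The plan is to deduce all three statements from the exact breadth formula of Theorem~\ref{theorem:Tait_conj_Jones_g}, arguing by contrapositive in each case. The first point I would record is that $B(\langle L\rangle)$ is a genuine invariant of the (unframed) link: changing the framing multiplies $\langle L\rangle$ by a power of $-A^3$, which shifts $\ord_\infty$ and $\ord_0$ by the same amount and hence leaves $B$ unchanged, while changing the representing diagram does not change $\langle L\rangle$ at all. Thus $B(\langle L\rangle)=B(\langle D\rangle)$ for every diagram $D$ of $L$, and by Remark~\ref{rem:tensor} I may regard any diagram as living in $S_{(g(D))}$ without altering the bracket. Since $L$ is non H-split, every diagram representing it is connected, so the only remaining hypothesis needed to apply Theorem~\ref{theorem:Tait_conj_Jones_g} is the absence of crossings as in Fig.~\ref{figure:reducedD_g}.

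For (1), I would assume $L$ alternating, pick an alternating diagram $D$, and untwist the crossings of Fig.~\ref{figure:reducedD_g} one at a time; this is a Reidemeister-I-type move that preserves the link type and the alternating property and lowers the crossing number, so after finitely many steps $D$ has no such crossings. Applying Theorem~\ref{theorem:Tait_conj_Jones_g} in $S_{(g(D))}$ gives $B(\langle L\rangle)=4n(D)+4-4g(D)-4k$, which is a multiple of $4$; the contrapositive is exactly (1). For (2) the given simple diagram $D$ already has neither Fig.~\ref{figure:reducedD_g} crossings nor crossings adjacent to two external regions (nor twice to one external region), i.e. $k=0$, so the theorem applies directly and yields $B(\langle L\rangle)=4\bigl(n(D)+1-g(D)\bigr)$. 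Here I would invoke the inequality $n(D)\ge g(D)$ for connected $\Z_2$-homologically trivial diagrams not contained in a disk with fewer holes (the content of the opening line of the proof of Lemma~\ref{lem:ineq2_g}); it forces $n(D)+1-g(D)\ge 1$, so $B(\langle L\rangle)$ is a \emph{positive} multiple of $4$, and the contrapositive gives (2).

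For (3), the homotopic genus hypothesis yields $g(D)=g$ for every diagram of $L$, and the assumption that $L$ meets no non-separating $2$-sphere is precisely what lets me take an alternating diagram to be simple, with $k=0$ (a crossing adjacent to two distinct external regions would exhibit a non-separating sphere meeting $L$). Theorem~\ref{theorem:Tait_conj_Jones_g} then gives $B(\langle L\rangle)=4n(D)+4-4g$, and comparing with the standing hypothesis $B(\langle L\rangle)<4n+4-4g$ forces $n(D)<n$. Since the crossing number of $L$ is at most $n(D)$, it is strictly below $n$; so if $L$ is alternating at all its crossing number is lower than $n$, which is the asserted dichotomy.

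I expect the main obstacle to lie in the reduction steps rather than in the arithmetic. Concretely, one must check carefully that untwisting Fig.~\ref{figure:reducedD_g} crossings in (1) keeps the diagram connected, alternating, and of the same link type so that Theorem~\ref{theorem:Tait_conj_Jones_g} remains applicable, and, most delicately in (3), that ``$L$ meets no non-separating $2$-sphere'' really allows all $k$-type crossings to be cleared while preserving $g(D)=g$. The inequality $n(D)\ge g(D)$ used in (2) also deserves an explicit justification, which I would supply by the same induction on the number of holes already set up in the proof of Lemma~\ref{lem:ineq2_g}.
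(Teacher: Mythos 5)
Your proposal is correct and takes essentially the same route as the paper: all three parts are deduced by contrapositive from the exact breadth formula of Theorem~\ref{theorem:Tait_conj_Jones_g}, and part (2) uses precisely the observation the paper singles out, namely that a connected diagram of a $\Z_2$-homologically trivial link with $g(D)=g$ has at least $g$ crossings, so $n+1-g\geq 1$. Your write-up merely makes explicit the reduction steps (framing-invariance of the breadth, removal of Fig.~\ref{figure:reducedD_g} crossings, passing to $S_{(g(D))}$ via Remark~\ref{rem:tensor}) that the paper leaves implicit.
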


\begin{ex}\label{ex:no_alt_g}
The knots represented by the diagrams in Fig.~\ref{figure:ex_no_alt} ($g=1$) are $\Z_2$-homologically trivial and have Kauffman bracket equal to $0$ (left) and $A-A^{-3}-A^{-5}$ (right). Therefore by Corollary~\ref{cor:conj_Tait_Jones_g}-(2) they are not alternating. 

By Corollary~\ref{cor:conj_Tait_Jones_g}-(1) the link in Example~\ref{ex:Kauff}-(7) ($g=2$) is non alternating. By Corollary~\ref{cor:conj_Tait_Jones_g}-(3) the knot in Example~\ref{ex:Kauff}-(5) ($g=2$) either is non alternating or has crossing number lower than $4$. For all the links in $\#_2(S^1\times S^2)$ with crossing number $3$ there is a non separating sphere that intersects the link at most twice. The knot in Example~\ref{ex:Kauff}-(5) does not have one such sphere, hence it is not alternating. 

Unfortunately we are not able to say if the links in Example~\ref{ex:Kauff}-(6) and Example~\ref{ex:Kauff}-(11) ($g=2$) are not alternating. Using Theorem~\ref{theorem:Tait_conj_Jones_g} we can only say that if the knot in Example~\ref{ex:Kauff}-(6) was alternating it should have an alternating diagram whose crossings are all adjacent to two regions ($n=k$).
\end{ex}

\begin{figure}
\begin{center}
\includegraphics[scale=0.55]{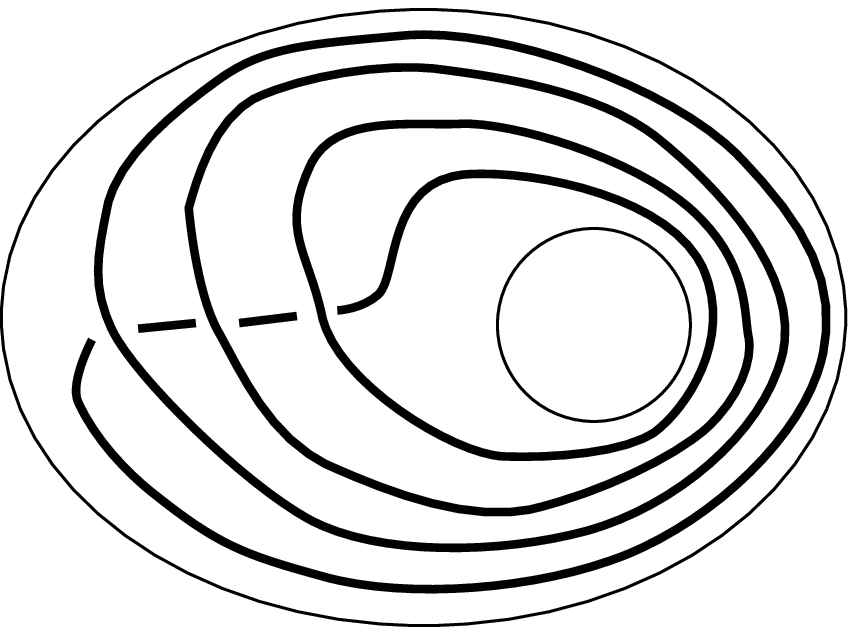}
\hspace{0.5cm}
\includegraphics[scale=0.55]{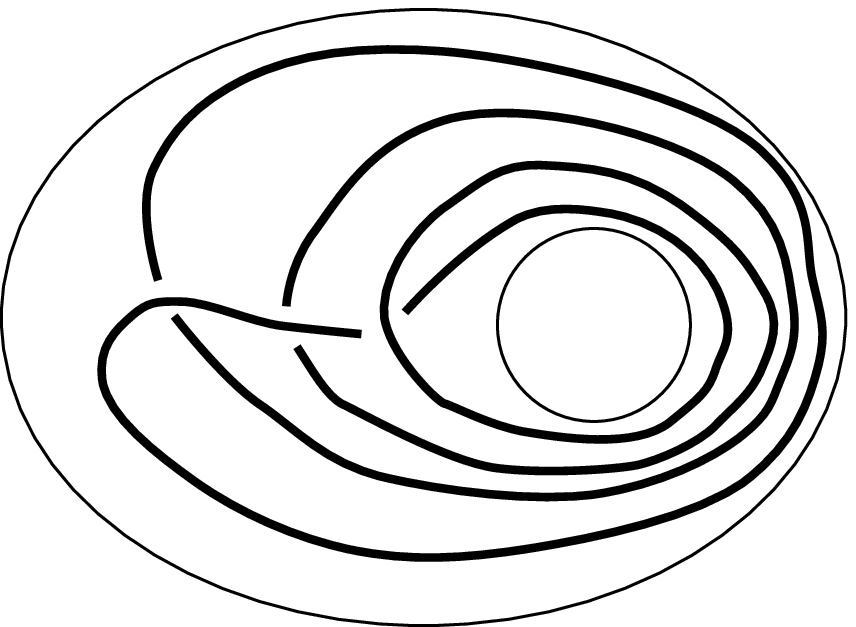}
\end{center}
\caption{Two $\Z_2$-homologically trivial knots in $S^1\times S^2$ whose Kauffman bracket are $0$ (left) and $A-A^{-3}-A^{-5}$ (right).}
\label{figure:ex_no_alt}
\end{figure}

\section{Open questions}
In this final section we ask some open questions whose solution in the positive would produce a sharper, or more complete, result than Theorem~\ref{theorem:Tait_conj_g}, and and we provide some information that
suggests that the natural extension of the Tait conjecture in $\#_g(S^1\times S^2)$ could be false for $g\geq 3$. 

The first obvious question is the natural extension of the Tait conjecture:
\begin{quest}\label{quest:Tait_conj_g}
Fix a proper embedding of $S_{(g)}$ in a handlebody of the H-decomposition of $\#_g(S^1\times S^2)$. Let $D\subset S_{(g)}$ be a connected, alternating, simple diagram of a $\Z_2$-homologically trivial link in $\#_g(S^1\times S^2)$ that is non H-split and with homotopic genus $g$. Is the number of crossings of $D$ equal to the crossing number of $L$?
\end{quest}

The following two questions aim to remove from Theorem~\ref{theorem:Tait_conj_g} the hypothesis of ``homotopic genus $g$'' and ``non H-split''.
\begin{quest}
Let $D\subset S_{(g)}$ be an alternating connected link diagram. Once a proper embedding of $S_{(g)}$ in a handlebody of the H-decomposition of $\#_g(S^1\times S^2)$ is fixed, is the link represented by $D$ non H-split?
\end{quest}

\begin{quest}
Let $D\subset S_{(g)}$ be a connected, alternating link diagram. Is the homotopic genus of the links represented by $D$ equal to $g(D)$?
\end{quest}

\begin{rem}\label{rem:ineq_psi}
If for every connected diagram $D\subset S_{(g)}$ with $g(D)=g$ and representing a $\Z_2$-homologically trivial link in $\#_g(S^1\times S^2)$, we had
$$
\psi(s_+) + \psi(s_-) \leq  2-g ,
$$
we could make Theorem~\ref{theorem:Tait_conj_g} sharper and answer positively to Question~\ref{quest:Tait_conj_g}. Unfortunately it is not true:  a diagram $D\subset S_{(3)}$ is shown in Example~\ref{ex:Kauff}-$(7)$, $D$ is simple, connected, with $g(D)=3$, represents a $\Z_2$-homologically trivial link, and it satisfies $\langle D | s_+ \rangle = A^7(-A^2-A^{-2})$, $\langle D|s_-\rangle = A^{-7}$, hence $\psi(s_+)=\psi(s_-)=0 >2-g$.
\end{rem}

In Remark~\ref{rem:ineq_psi} we did not require that the diagrams represent alternating links, and we know that the shown example represents a non alternating link (see Example~\ref{ex:no_alt_g}). So, it is natural to ask the following: 

\begin{quest}
Let $D\subset S_{(g)}$ be a connected (maybe not alternating) diagram with $g(D)=g$ that represents an alternating $\Z_2$-homologically trivial link in $\#_g(S^1\times S^2)$. Is it true that 
$$
\psi(s_+) + \psi(s_-) \leq 2-g \ \ ?
$$
\end{quest}

\end{document}